\newcommand{\e}{\varepsilon}
\DeclareMathOperator{\diam}{diam}
\DeclareMathOperator{\dist}{dist}
\DeclareMathOperator{\card}{card}
\newcommand{\be}{\begin{equation}}
\newcommand{\ee}{\end{equation}}
\newtheorem{theorem}{Theorem}[section]
\newtheorem{definition}[theorem]{Definition}
\newtheorem{proposition}[theorem]{Proposition}
\newtheorem{lemma}[theorem]{Lemma}
\newtheorem{example}[theorem]{Example}
\newtheorem{question}[theorem]{Question}
\newcommand{\R}{\mathbb{R}}
\def\N{{\mathbb N}}
\def\moverlay{\mathpalette\mov@rlay}
\def\mov@rlay#1#2{\leavevmode\vtop{%
   \baselineskip\z@skip \lineskiplimit-\maxdimen
   \ialign{\hfil$\m@th#1##$\hfil\cr#2\crcr}}}
\newcommand{\charfusion}[3][\mathord]{
    #1{\ifx#1\mathop\vphantom{#2}\fi
        \mathpalette\mov@rlay{#2\cr#3}
      }
    \ifx#1\mathop\expandafter\displaylimits\fi}
\numberwithin{equation}{section}
\begin{document}
\thispagestyle{empty}

\title{The density of sets containing large similar copies of finite sets}

\author{Kenneth Falconer}
\address{Kenneth Falconer,
School of Mathematics and Statistics,
University of St.~Andrews}
\email{kjf@st-andrews.ac.uk}

\author{Vjekoslav Kova\v{c}}
\address{Vjekoslav Kova\v{c},
Department of Mathematics, Faculty of Science,
University of Zagreb, Bijeni\v{c}ka cesta 30, 10000 Zagreb, Croatia}
\email{vjekovac@math.hr}
\thanks{VK is supported by the Croatian Science Foundation, project n$^{\circ}$ UIP-2017-05-4129 (MUNHANAP)}

\author{Alexia Yavicoli}
\address{Alexia Yavicoli,
School of Mathematics and Statistics, University of St.~Andrews, UK.
Current address: Department of Mathematics, the University of British Co\-lum\-bia. 1984 Mathematics Road, Vancouver BC V6T 1Z2, Canada}
\email{yavicoli@math.ubc.ca, alexia.yavicoli@gmail.com}
\thanks{AY is supported by the Swiss National Science Foundation, grant n$^{\circ}$ P2SKP2\_184047}

\keywords{pattern, density, similarity, arithmetic progression, discrepancy}
\subjclass[]{28A12, 05D10 \and 11B25}

\begin{abstract}
We prove that if $E \subseteq \R^d$ ($d\geq 2$) is a Lebesgue-measurable set with density larger than $(n-2)/(n-1)$, then $E$ contains  similar copies of every $n$-point set $P$ at all sufficiently large scales. Moreover, `sufficiently large' can be taken to be uniform over all $P$ with prescribed size, minimum separation and diameter. On the other hand, we construct an example to show that the density required to guarantee all large similar copies of $n$-point sets tends to $1$ at a rate $1- O(n^{-1/5}\log n)$.
\end{abstract}


\maketitle



\section{Introduction}

In this paper a finite subset $P$ of $\R^d$ with at least two distinct points will be called a {\em pattern}. There are many ways of viewing the question of finding necessary or sufficient conditions on a set  $E \subseteq\R^d$  to contain some, or many, similar (or alternatively homothetic or congruent) copies of a given pattern $P$. Here we will be concerned with finding conditions that guarantee that $E$ contains scaled similar copies of $P$ for all sufficiently large scalings. We will assume throughout that $d\geq 2$ and that $E$ is $\mathcal{L}^d$-measurable, where $\mathcal{L}^d$ denotes the Lebesgue measure on $\R^d$. The Euclidean norm of $x\in\R^d$ will simply be written as $\|x\|$. We denote by $B_r(x)\subseteq \R^d$ the closed ball of centre $x$ and radius $r$; we will abbreviate this to $B_r$ for any ball of radius $r$ when the centre is not relevant. The {\em upper Banach density} of $E \subseteq \R^d$ is defined by
\be\label{upbden}
\rho:=\rho(E):=\limsup_{r \to +\infty} \sup_{x \in \R^d}\frac{\mathcal{L}^d(E \cap B_r(x))}{\mathcal{L}^d(B_r)}
\ee
and the usual {\em upper density} by
\be\label{upden}
\overline{d}(E):=\limsup_{r \to +\infty}\frac{\mathcal{L}^d(E \cap B_r(0))}{\mathcal{L}^d(B_r)},
\ee
{these definitions being independent of the origin chosen.} It is clear that $\rho(E) \geq \overline{d}(E)$ and the inequality can be strict.
{Consequently, results that assume a lower bound on $\rho(E)$ are more general than the corresponding results that assume a lower bound on $\overline{d}(E)$.}

The most basic result of this kind is for 2-point patterns: for every Lebesgue-measurable set $E\subseteq \R^2$ with {positive upper Banach density} there exists $R>0$ such that all distances greater than $R$ are realised between the points of $E$. This problem was posed by Sz\'{e}kely \cite{S82} and several proofs  were given in the 1980s, by Falconer and Marstrand \cite{FM86} with a geometric proof, by Bourgain \cite{B86} for $\R^d$ with $d\geq 2$ using harmonic analysis and by Furstenberg, Katznelson and Weiss \cite{FKW90} using ergodic theory. More recently Quas \cite{Q09} gave a more combinatorial proof.

Rice \cite{R20} showed that the positive density requirement cannot be weakened. For all $d\geq 1$ and any function $f\colon (0, \infty) \to [0, 1]$ with $\lim_{r\to\infty}f(r)= 0$ he constructed a measurable set $E \subseteq \R^d$ and a sequence $r_n \to \infty$ such that $\|x-y\|\neq r_n$ for all $x, y \in E$,  with $\mathcal{L}^d(E \cap B_{r_n}(0))/\mathcal{L}^d(B_{r_n}(0)) \geq f(r_n)$ for all $n \in \N$, where $B_r(x)$ is the ball of radius $r$ centred at $x$.

It is natural to consider analogous questions for patterns with more than two points. Indeed, Bourgain's paper also showed that a set of positive upper density $E \subseteq \R^d$ contains all sufficiently large similar copies of every $d$-point pattern provided that the points span a $(d-1)$-dimensional hyperplane, see \cite{HLM17,Kov20,LM18,LM20} for various other proofs. On the other hand, he showed by the following example, which relies on the parallelogram identity, that this spanning condition is necessary.

\begin{example}\label{ex:Bourgain} {\rm\cite{B86}}.
Let  $0< s<\frac{1}{4}$ and let $E  := \{x\in \R^d : \|x\|^2 \in [0,s] + (\mathbb{N}\cup\{0\})\}$, so that $E$ is a union of annuli and has density $s$. Then there are arbitrarily large values of $r$ such that $E$ contains no congruent copy of $\{0,r,2r\}$.
\end{example}

Subsequently, Graham showed that a similar conclusion holds for any non-spherical set.

\begin{example}{\rm\cite{Gra94}}.
Let $P\subseteq \mathbb{R}^d$ be a finite set of points that do not all lie on the surface of any $(d-1)$-sphere. Then there is a set $E$ of positive upper density and arbitrarily large values of $r$ such that $E$ does not contain a congruent copy of $rP$ .
\end{example}

It is an open question whether every plane set of positive upper density contains all large copies of every non-degenerate triangle. However,  Furstenberg, Katznelson and Weiss \cite[Theorem B]{FKW90} showed that if $E\subseteq \R^2$ has positive upper density, then every $\delta$-neighbourhood of $E$  contains all sufficiently large similar copies of every triangle, and Ziegler \cite{Z06} extended this to larger patterns (i.e., simplices, possibly degenerate) in $\R^d$ for $d\geq 2$.

As far as other configurations go,
Morris \cite{M15} showed that in any set of positive density one can find triangles with all sufficiently large (compatible) perimeters and areas. Lyall and Magyar \cite{LM18} considered products of $k$- and $k'$-simplices in $\R^d$ where $ k+k'+6\leq d$ and in particular showed that, given the vertices of a rectangle $P$, any subset of $\R^d \ (d\geq 4)$ with positive upper Banach density contains all sufficiently large similar copies of $P$.
Generalizations to multiple products were discussed in \cite{DK18} and \cite{LM19}; the latter paper successfully handled arbitrary products of non-degenerate simplices.
Moreover, Lyall and Magyar \cite{LM20} showed that sets of positive upper Banach density in $\R^d$ contain all large enough copies of `proper $k$-degenerate distance graphs' if $d\geq k+1$; for example when $k=1$ these include finite trees and chains with prescribed edge lengths. Here the position of the vertices of the graphs in the large copies is immaterial provided that the scaled distances between vertices are realised.
It is also possible to study analogous questions for anisotropic patterns, that is, for families of point configurations with power-type dependence on a real parameter which might be thought of as their size, see \cite{Kov20}.
Finally, several authors have got around Example \ref{ex:Bourgain} by investigating these questions when $\R^d$ is endowed with the $\ell^p$-norm for $1\leq  p\leq \infty$, $p\neq 2$, see \cite{CMP17,DK20,DK18,DVL18,K03}.

\smallskip
Given such conclusions  it is natural to seek general sufficient conditions that ensure that a set contains all sufficiently large similar copies of a given pattern. For homothetic copies (i.e., when we do not allow rotations) {an easy} argument establishes the following statement {and we do not even need to confine ourselves to large scales.}

\begin{proposition}\label{easypropintr}
Let $E \subseteq \R^d$ have upper {Banach} density $\rho> \frac{n-1}{n}$ and let $P$ be an $n$-point pattern in $\R^d$.
{Then for every $r>0$ the set $E$ contains a translated copy of $rP$.}
\end{proposition}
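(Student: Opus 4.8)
The plan is to use a pigeonhole / averaging argument over translates. Write $P = \{p_1, \dots, p_n\} \subseteq \R^d$ and fix a scale $r > 0$; we must find $t \in \R^d$ with $t + r p_i \in E$ for all $i = 1, \dots, n$. Equivalently, we must show that the intersection $\bigcap_{i=1}^n (E - r p_i)$ is nonempty. Since the upper Banach density of $E$ is $\rho > \frac{n-1}{n}$, for any prescribed $\e > 0$ there exist arbitrarily large radii $R$ and centres $x$ with $\mathcal{L}^d(E \cap B_R(x)) > (\rho - \e)\,\mathcal{L}^d(B_R)$. The translated sets $E - r p_i$, restricted to the slightly shrunk ball $B_{R - r D}(x)$ where $D = \diam(P)$, each occupy a proportion at least $(\rho - \e) - O(rD/R)$ of that ball; choosing $R$ large enough (depending on $r$, $D$, $\e$) this proportion exceeds $\frac{n-1}{n}$.

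The key step is then the elementary inclusion-exclusion / union-bound observation: if $n$ measurable subsets of a set $S$ of finite positive measure each have measure exceeding $\frac{n-1}{n}\,\mathcal{L}^d(S)$, then their intersection is nonempty. Indeed, the complement of the intersection is the union of the $n$ complements, each of measure less than $\frac{1}{n}\,\mathcal{L}^d(S)$, so the union has measure less than $\mathcal{L}^d(S)$; hence the intersection has positive measure, and in particular is nonempty. Applying this with $S = B_{R-rD}(x)$ and the sets $(E - r p_i) \cap S$ yields a point $t$ in the intersection, and by construction $t + r p_i \in E$ for every $i$, so $t + rP \subseteq E$, i.e.\ $E$ contains a translated copy of $rP$.

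There is essentially no serious obstacle here; the only point requiring a little care is the passage from a density statement (a $\limsup$ over balls $B_R$) to a statement about the fixed-size translates $E - r p_i$. This is handled by the boundary estimate $\mathcal{L}^d(B_R \setminus B_{R-rD}) / \mathcal{L}^d(B_R) \to 0$ as $R \to \infty$ with $r, D$ fixed, which lets us absorb the error from translating into the choice of $\e$ and $R$. Note that this argument works at every scale $r$, with no lower bound on $r$ needed, which is why the proposition is stated for all $r > 0$; it also only uses upper Banach density, consistent with the remark in the introduction that such hypotheses are the more general ones.
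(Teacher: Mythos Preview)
Your proof is correct and follows essentially the same approach as the paper: both arguments pick a large ball on which $E$ has density close to $\rho$, observe that the $n$ translates $E-rp_i$ each occupy a proportion greater than $\tfrac{n-1}{n}$ of a suitable reference ball (you shrink to $B_{R-rD}(x)$, the paper enlarges to $B_{r'+r\max\|p_i\|}(x)$), and then apply the union bound to conclude that the translates have a common point. The only cosmetic point is that your use of $D=\diam P$ tacitly assumes $0\in P$ (otherwise one needs $\max_i\|p_i\|$ in place of $D$), but since we seek translated copies this normalisation is harmless.
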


{The proof of Proposition~\ref{easypropintr} is straightforward. By the definition of $\rho(E)$ we can find $x\in\R^d$ and $r'>0$ such that $E \cap B_{r'}(x)$ occupies a proportion strictly greater than $\frac{n-1}{n}$ of the ball centred at $x$ with radius $r'+r\max_{y\in P}\|y\|$. Translates of $E \cap B_{r'}(x)$ by the vectors from $-rP$ remain inside this enlarged ball, so they all share a common point. We omit the details.}

An aim of this paper is to obtain a quantitatively stronger result, that sets of density greater than  $\frac{n-2}{n-1}$ contain all sufficiently large similar copies of $n$-point patterns in a sense that is uniform over certain patterns of a fixed size. For a pattern $P=\{x_0, \ldots, x_{n-1}\} \subseteq \R^d$ we write $\mathop{{\rm sep}} P  = \min_{i\neq j}\|x_i - x_j\|$ for the {\it minimum separation} of $P$ and ${\diam P} = \max_{i\neq j}\|x_i - x_j\|$ for the {\it diameter} of $P$. By allowing rotations, the density of $E$ that guarantees similar copies of $P$ does not have to be as large as for homothetic copies.

\begin{theorem}\label{mainth}
Let $E \subseteq \R^d$ have upper Banach density $\rho >\frac{n-2}{n-1}$.
Then there exists a number $R:=R(E,S,D,n)>0$ such that, for every $n$-point pattern $P\subseteq \R^d$ satisfying $S\leq \mathop{{\rm sep}} P \leq  {\diam P}\leq D$, if $r\geq R$, then there exist $z_r \in \R^d$ and a rotation $Q_r\in SO(d)$ such that $r Q_r (P)+z_r \subseteq E$, i.e., $E$ contains a similar copy of $P$ at all scales at least $R$.
\end{theorem}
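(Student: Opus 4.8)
The plan is to reduce the problem to a discrepancy-type statement about the sphere and then exploit the ``slack'' between the density $\rho$ and the critical value $\frac{n-2}{n-1}$. Fix a pattern $P=\{x_0,\dots,x_{n-1}\}$ with $S\le \operatorname{sep}P\le\operatorname{diam}P\le D$ and a scale $r\ge R$ (with $R$ to be chosen). First I would pass from $E$ to a large ball $B_\rho$-full of $E$: by the definition of upper Banach density and a limiting argument there is a ball $B=B_\ell(w)$ with $\ell$ huge compared to $rD$ on which $\mathcal L^d(E\cap B)/\mathcal L^d(B) > \rho' := \frac{n-2}{n-1}+\eta$ for some fixed $\eta>0$. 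I want to find a rotation $Q\in SO(d)$ and a translation vector $z$ (with $rQ(P)+z$ inside a slightly shrunken ball, so boundary effects are negligible) such that the $n$ translated-and-rotated copies $E-z-rQ(x_i)$, restricted to that shrunken ball, have a common point. A common point exists as soon as the sum of the $n$ complementary measures is strictly less than the measure of the shrunken ball, i.e. as soon as the \emph{average} complementary density over the $n$ translates is $<\frac1n$. Since each individual copy has complementary density roughly $1-\rho' < \frac{1}{n-1}$, a crude union bound gives $\frac{n(1-\rho')}{1} < \frac{n}{n-1}$, which is just barely too weak --- it is off by exactly the factor that forces one to use rotations and the extra point.

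The key idea to close this gap is averaging over rotations. For a fixed base configuration one considers the quantity $\int_{SO(d)}\sum_{i=0}^{n-1}\mathbf 1_{E^c}\big(y+rQ(x_i)\big)\,d\mu(Q)$, integrated also over admissible centres $y$; here $\mu$ is the Haar probability measure on $SO(d)$. Using the fact that $Q\mapsto Q(x_i)$ pushes $\mu$ forward to the uniform measure on the sphere of radius $\|x_i-x_0\|$ (after recentering at $x_0$, so the $i=0$ term contributes exactly $1-\rho'<\frac{1}{n-1}$ and the other $n-1$ terms are \emph{sphere averages} of $\mathbf 1_{E^c}$), one needs that spherical averages of $\mathbf 1_{E^c}$ over large spheres cannot all stay close to $1$ on a set of centres of large measure. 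This is precisely where the decay of the spherical-measure Fourier transform (the Bourgain/Stein spherical maximal-function circle of ideas) enters: convolution of $\mathbf 1_{E^c}$ with normalized surface measure $\sigma_t$ on a sphere of radius $t$ tends, in an $L^2$-averaged sense over a range of $t$, towards the mean value $1-\rho'$, with an error controlled by $\|\widehat{\sigma_t}\|$ which decays like $t^{-(d-1)/2}$. Choosing $R$ (hence $t\asymp rS$ through $rD$) large makes this error smaller than $\eta/2$, so the averaged sum is at most $(1-\rho') + (n-1)\big((1-\rho')+\eta/2\big) = n(1-\rho'-\eta/2)+\eta(n-1)/2 \cdot$\,(bookkeeping) which, because $1-\rho' < \frac1{n-1}$ strictly, is $<\frac{n}{n-1}\le 1 + \tfrac{1}{n-1}$; a more careful accounting shows it is $<1$, hence there is a choice of $Q$ and centre $y$ for which the integrand is $<1$, i.e. (being a sum of $n$ indicator values, an integer) equals $0$. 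That gives the common point and thus the similar copy.

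I would organize the write-up as: (i) a reduction lemma producing the big ball and reducing to finding a common point of $n$ sets; (ii) the rotation-averaging identity, rewriting the sum as one ``free'' term plus $n-1$ spherical convolutions $\mathbf 1_{E^c}*\sigma_{t_i}$ with $t_i = r\|x_i-x_0\|\in[rS,rD]$; (iii) the quantitative equidistribution estimate $\|\mathbf 1_{E^c}*\sigma_t - (1-\rho')\|$ small on average, obtained from $|\widehat{\sigma_t}(\xi)|\lesssim (t|\xi|)^{-(d-1)/2}$ together with an $L^2$ smoothing/Plancherel argument on the large ball, the key point being that uniformity in $P$ comes from the parameters $S,D,n$ alone (they pin down the range of radii and the number of terms); (iv) assembling the counting inequality and invoking the pigeonhole/integrality step. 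The main obstacle is step (iii) done \emph{uniformly} over the pattern: one must make the spherical averaging error small simultaneously for all radii in $[rS,rD]$ and control the contribution near the boundary of the big ball; this is where the precise choice of $R=R(E,S,D,n)$ is forced, and where the constant $\frac{n-2}{n-1}$ (rather than $\frac{n-1}{n}$) is extracted --- essentially one spends the averaging gain on one of the $n$ points, replacing $n$ by $n-1$ in the denominator. I expect $d\ge 2$ to be used precisely to have $\widehat{\sigma_t}\to 0$, and the result to be false for $d=1$ where no such decay holds.
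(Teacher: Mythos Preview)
Your overall intuition is right---rotation averaging is what buys the improvement from $\frac{n-1}{n}$ to $\frac{n-2}{n-1}$---but the specific averaging you propose does not deliver it. When you average $\sum_{i=0}^{n-1}\mathbf{1}_{E^c}\bigl(y+rQ(x_i)\bigr)$ over both $Q\in SO(d)$ and over all centres $y$ in the big ball, the $i=0$ term contributes $1-\rho'$ and, by Fubini, each of the remaining $n-1$ spherical-average terms \emph{also} contributes $1-\rho'$ up to boundary error, so the total is $n(1-\rho')+o(1)$. With $\rho'=\frac{n-2}{n-1}+\eta$ this equals $\frac{n}{n-1}-n\eta+o(1)$, which is strictly greater than $1$ whenever $\eta<\frac{1}{n(n-1)}$. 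Your ``a more careful accounting shows it is $<1$'' is therefore false for $\rho$ close to the threshold, and no amount of decay of $\widehat{\sigma_t}$ repairs this: that decay only forces $\mathbf{1}_{E^c}*\sigma_t$ to be close to its mean $1-\rho'$, and the mean is already the obstruction. The integrality trick at the end is fine, but it never fires.

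The missing idea is that the centre must be \emph{placed in $E$}, not merely averaged over the ball. If the image of $x_0$ lies in $E$ then the $i=0$ term vanishes identically, leaving only $n-1$ spherical conditions, and now $\frac{n-2}{n-1}$ is exactly what a union bound over $Q$ needs. The paper does this in two steps. First, a second-moment/Chebyshev argument (morally your step (iii), carried out via an explicit kernel $K_r^{(d)}$ rather than $\widehat{\sigma_t}$, though the Fourier route would also work) shows that for every large $s$ there is a ball $B_s$ with $\mathcal L^d(E\cap B_s)>\rho'\mathcal L^d(B_s)$ on which the set of centres $x$ with $\mathcal L^{d-1}(E\cap S_{r_i s}(x))\le\rho'\mathcal L^{d-1}(S_{r_i s})$ has measure $<\delta\,\mathcal L^d(B_s)$ for each $i$; choosing $\delta<\rho'/(n-1)$ then leaves room inside $E\cap B_s$ for a centre $x_0$ that is simultaneously good for all $n-1$ radii. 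Second, at that fixed $x_0\in E$ one runs the rotation union bound on the remaining $n-1$ points. Your proposal collapses these two steps into a single double average over $(y,Q)$, and that is precisely where the gain of one point is lost.
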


To prove Theorem \ref{mainth} we develop a quantitative version of the argument by Falconer and Marstrand \cite{FM86}, which we extend to $\R^d$ for $d\geq 2$. Note that, because in the proofs of Proposition \ref{concenspheres} and Lemma \ref{rotation} we choose $x$ and $Q$ to be any points in certain sets of positive ${\mathcal L}^d$-measure and $\sigma$-measure respectively,  there will be a set of isometries of positive $(\sigma \times \mathcal{L}^d)$-measure under which copies of $P$ at a (large) given scale will be contained in $E$.

It is natural to ask for the minimum upper Banach density required  in Theorem \ref{mainth}: to what extent can the value $\frac{n-2}{n-1} = 1 -\frac{1}{n-1}$ be reduced, and how does the density required to guarantee the presence of all sufficiently large copies of all $n$-point patterns behave as $n\to\infty$?  Using arithmetic sequences we show that this density must approach 1 as $n$ gets large, indeed at a rate  $1- O(n^{-1/5}\log n)$.
The logarithm function is understood to have the number $e$ as its base.

\begin{theorem}\label{thm:lowerbound}
For all $n\in \mathbb{N}$ $(n\geq 2)$ and $d\in \mathbb{N}$ there exists a measurable set $E=E(d,n) \subseteq \mathbb{R}^d$ of density at least
\be\label{eq:lowerbound}
1-\frac{10 \log n}{n^{1/5}}
\ee
such that there are arbitrarily large values of $r$ for which $E$ contains no congruent copy of $\{0,r,2r,\ldots,(n-1)r\}$.
\end{theorem}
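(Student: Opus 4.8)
The goal is to construct, for each $n$ and $d$, a set $E \subseteq \R^d$ of density at least $1 - 10 n^{-1/5}\log n$ that avoids arbitrarily large homothetic copies of the arithmetic progression $\{0, r, 2r, \ldots, (n-1)r\}$.

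Here is my plan. I would reduce immediately to dimension $d=1$: if $A \subseteq \R$ works in one dimension, then $E := A \times \R^{d-1}$ has the same density in $\R^d$ (densities of cylinders over balls converge to the one-dimensional density), and if $E$ contained a congruent copy of $\{0,r,\ldots,(n-1)r\}$, projecting onto a suitable coordinate axis would only shrink the spacing; so one needs a little care --- actually one should take $E$ to be a union of slabs $A + v$ for a well-chosen direction, or more simply observe that a congruent copy of a collinear set in $E = A\times\R^{d-1}$ projects to a (possibly shorter) AP-like configuration on the first axis, and then run the one-dimensional obstruction against \emph{all} APs with common difference $\ge cr$. Cleanest is: build $A\subseteq\R$ that, for arbitrarily large $t$, contains no set of the form $\{a, a+t, a+2t, \ldots, a+(n-1)t\}$, and additionally no such set with spacing anywhere in a whole interval $[t, Ct]$; then a congruent copy of the progression at scale $r$ inside $A\times\R^{d-1}$ would have first-coordinate spacings forming an AP with difference $r\cos\theta$, and if $r\cos\theta$ is too small one instead uses a different coordinate --- so really one wants $A$ to avoid $n$-term APs of \emph{every} sufficiently large common difference in a range, and then choose the "arbitrarily large $r$" to be those $r$ for which $[r/\sqrt d, r]$ lies in a forbidden range. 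I would set this up carefully but it is routine once the one-dimensional construction is in hand.

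The heart of the matter is the one-dimensional construction, which I expect to be built from a base-$B$ digit restriction. The idea: $\{0,1,\ldots,n-1\}$ is an $n$-term AP with common difference $1$; if we forbid, in the base-$B$ expansion, any digit from lying in some "bad" residue window, we can block long APs at scale $B^k$. More precisely, pick a large modulus $m$ and a set $D \subseteq \Z/m\Z$ with $|D|/m$ close to $1$ such that $D$ contains no $n$-term AP of common difference $1$, i.e. no block of $n$ consecutive residues; equivalently $\Z/m\Z \setminus D$ is "$1/n$-syndetic". The smallest such complement has size $\lfloor m/n \rfloor$, giving density $1 - 1/n$ --- but a single modulus only blocks APs whose common difference $r$ is $\equiv 1$ times a unit mod $m$, or rather only blocks the specific scale where the spacing lands exactly on consecutive integers. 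To block \emph{arbitrarily large} scales we take $A := \{ x \in \R : \text{for every } k, \text{ the digit-block of } x \text{ around scale } B^k \text{ avoids a forbidden pattern}\}$, a self-similar Cantor-type set, and the arbitrarily large $r$ are $r \approx B^k$. The exponent $n^{-1/5}\log n$ (rather than the naive $1/n$) must come from needing to block a whole \emph{range} of common differences $[r, Cr]$ simultaneously at each scale: forbidding $n$-term APs of all common differences in a dyadic range forces one to delete roughly $\log(Cr)/\log(\text{something})$ times more, or one balances the number of "levels" used against the density lost per level. I would optimize: if each level costs density factor $(1 - \delta)$ and we need $L$ levels to cover all common differences up to a factor $C$, we lose about $L\delta$; meanwhile blocking $n$-term APs at a single common difference within a residue system of size $\sim 1/\delta$ requires $\delta \gtrsim 1/n$ --- but blocking \emph{all} common differences coprime to the modulus, or in a Bohr-set sense, is what pushes $\delta$ up to roughly $n^{-1/5}$ after optimizing $L$ and the modulus size against each other. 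The precise appearance of the fifth root and the logarithm is exactly the delicate counting/optimization I would expect to be the main obstacle.

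So the key steps, in order: (1) reduce to $d=1$, replacing "a single large $r$" by "all $r$ in a range $[r_0, C r_0]$ for arbitrarily large $r_0$"; (2) fix a modulus $m$ and construct $D \subseteq \{0,1,\ldots,m-1\}$ containing no $n$-term AP whose common difference lies in a prescribed interval, with $|D|/m$ as large as possible --- this is a finitary extremal problem and is where a clever choice (likely a union of short intervals positioned via a quantitative argument, or a random/probabilistic deletion argument) is needed; (3) iterate the digit restriction across scales $B^k$ for a suitable base $B = B(m,n)$, forming a self-affine set $A$, and check that its density equals $\prod$ (per-level densities) $= (|D|/m)^{O(\log_B(\text{range}))}$, i.e. control how many levels interact with a given scale $r$; (4) optimize the free parameters ($m$, $B$, the range width $C$, the number of interacting levels) to push the density up to $1 - 10 n^{-1/5}\log n$, carefully tracking constants so the "$10$" is honest; and (5) verify the avoidance property: a congruent copy of $\{0,r,\ldots,(n-1)r\}$ with $r$ in the chosen range would, at the scale $B^k \approx r$, force $n$ consecutive-ish digits to realize a forbidden AP pattern in $D$, a contradiction. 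The main obstacle is Step (2) together with the optimization in Step (4): getting a density as close to $1$ as $1 - O(n^{-1/5}\log n)$ while still killing an entire range of common differences is not something a naive "delete one residue in $n$" gives, and the exponent $1/5$ signals a genuinely nontrivial balancing of several competing parameters.
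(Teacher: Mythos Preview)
Your reduction to $d=1$ via $E = A \times \R^{d-1}$ is fatally flawed. If the congruent copy of $\{0,r,\ldots,(n-1)r\}$ is placed along a line perpendicular to the first coordinate axis (i.e.\ $\cos\theta=0$), then all $n$ points share the same first coordinate $a$; as soon as $a\in A$ the entire progression lies in $E$, regardless of any property of $A$. Your remark ``if $r\cos\theta$ is too small one instead uses a different coordinate'' does not help, since $E=A\times\R^{d-1}$ imposes no restriction whatsoever on the remaining $d-1$ coordinates. Taking a full product $A^d$ would fix this but drops the density to $(1-\varepsilon)^d$, which depends on $d$ and so cannot meet the $d$-independent bound \eqref{eq:lowerbound}. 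In short, for $d\geq 2$ a cylinder over a one-dimensional set can never avoid congruent copies of a collinear pattern at any scale, so the whole reduction collapses.

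The paper bypasses this obstacle by taking a rotationally symmetric set directly in $\R^d$: Bourgain's annuli
\[
E=\Big\{x\in\R^d:\ \dist(\|x\|^2,\Z)<\tfrac{1-\varepsilon}{2}\Big\},\qquad \varepsilon=\tfrac{10\log n}{n^{1/5}}.
\]
The parallelogram law forces $a_k:=\|x_k\|^2$ to be a quadratic polynomial $r^2k^2+Ak+B$ for any isometric image $x_0,\ldots,x_{n-1}$ of the progression, so the problem becomes: for arbitrarily large $r$, the sequence $(a_k\bmod 1)_{k<n}$ must hit the gap $[(1-\varepsilon)/2,(1+\varepsilon)/2]$. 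Choosing $r^2$ congruent mod $1$ to a badly approximable number (the golden ratio), the Erd\H{o}s--Tur\'an inequality together with van der Corput differencing bounds the discrepancy of $(a_k\bmod 1)$ by $O(n^{-1/5}\log n)$, uniformly in $A,B$. This is where the exponent $1/5$ genuinely comes from --- not from balancing levels of a Cantor construction, but from the standard square-root loss in Weyl differencing a quadratic exponential sum combined with the harmonic sum in Erd\H{o}s--Tur\'an. Your digit-restriction outline never identifies a mechanism that would produce this specific exponent, and Step~(2)/(4) as written is pure speculation.
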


{Somewhat surprisingly, Theorem~\ref{thm:lowerbound} will be reduced to density and equidistribution properties of quadratic sequences on the torus $\mathbb{T}=\mathbb{R}/\mathbb{Z}$.
Considering other patterns than just arithmetic progressions could lead to interesting problems on Diophantine approximations.}

Theorems \ref{mainth} and \ref{thm:lowerbound} leave open the following question.

\begin{question}
What is the smallest $0\leq\rho_{\textup{min}}(d,n)<1$ such that every measurable set $E\in \mathbb{R}^d$ of upper {Banach} density $\rho>\rho_{\textup{min}}(d,n)$ contains all sufficiently  large scale similar copies of all $n$-point patterns?
Theorems \ref{mainth} and \ref{thm:lowerbound} give
\[ 1-\frac{10 \log n}{n^{1/5}} \leq \rho_{\textup{min}}(d,n) \leq 1 -\frac{1}{n-1}. \]
Is it possible to improve either one of the two asymptotic bounds $1- O(n^{-1/5}\log n)$ and $1- O(n^{-1})$ as $n\to\infty$?
\end{question}

\smallskip
We remark in passing that problems of a similar nature are widely studied in the context of null Lebesgue measure, where one seeks conditions on the Hausdorff dimension or thickness of a set to guarantee that it contains a similar copy of a pattern.
In particular, {\L}aba and Pramanik \cite{LP09} gave conditions on fractal sets in the real line that ensure the existence of an arithmetic progression of length $3$. Then Henriot, {\L}aba and Pramanik \cite{HLP16} and Chan, {\L}aba and Pramanik  \cite{CLP16} improved the hypotheses and obtained results for more general patterns in $\R^d$. Iosevich and Liu \cite{IL19}  made a further improvement in $\R^4$ for copies of triangles. See also \cite{IT19,GILP16,GIM18}, where patterns are guaranteed in sets of sufficiently large Hausdorff dimension, and \cite{Y20} for sets of large enough thickness.

\smallskip
The proof of Theorem \ref{mainth} spans Sections \ref{sec:finitepatterns} and \ref{sec:keyestimates}, while the proof of Theorem \ref{thm:lowerbound} will be given in Section \ref{sec:lowerbound}.

\section{Proof of Theorem \ref{mainth}: upper bound for $\rho_{\textup{min}}$}
\label{sec:finitepatterns}

In this section we will prove Theorem \ref{mainth}, that is, we will show that a set $E$ contains similar copies of a pattern $P$ at all sufficiently large scalings provided that the {upper} Banach density of $E$ is {greater than $\frac{n-2}{n-1}$}. We will also see that `sufficiently large' can be taken to be uniform over the patterns $P$ satisfying $\card P = n$ and $S\leq \mathop{{\rm sep}} P \leq  {\diam P}\leq D$.

{We require the fact, stated in Proposition \ref{concenspheres},} that if $E$ has upper Banach density $\rho$ and $\rho'<\rho$  then any given family of a finite number of concentric spheres can be scaled and translated so that a proportion at least $\rho'$ of each spherical surface is in $E$, for all sufficiently large scalings.
{We denote by $S_r(x)$ the sphere of centre $x$ and radius $r$; we will abbreviate this to $S_r$ for any sphere of radius $r$ when the centre is not relevant.}

\begin{proposition}\label{concenspheres}
Let $E \subseteq \R^d$ be measurable and let $0<\rho' <\rho (E)$ and $0<S\leq D$.
Then there is an $s_0:=s_0(E,S,D,m)>0$ such that, for every set of numbers $\{r_i\}_{i=1}^{m}$ with $ r_i \in [S,D]$ for all $i$, for all $s\geq s_0$ there exists $x\in E$ such that
\be\label{bigint}
\mathcal{L}^{d-1} (E\cap S_{r_i s}(x) ) > \rho' \mathcal{L}^{d-1} (S_{r_i s})
\ee
for all $1\leq i\leq m$.
\end{proposition}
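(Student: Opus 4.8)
The plan is to choose the centre $x$ by a Fubini/averaging argument inside a ball on which $E$ has density close to $\rho(E)$, first handling a single sphere and then combining.

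\emph{The mean value identity and one sphere.} For measurable $A\subseteq\R^d$, a radius $t>0$ and a ball $B_L(x_0)$, writing $\mathcal{L}^{d-1}(A\cap S_t(x))=\int_{S_t(0)}\mathbf{1}_A(x+w)\,d\mathcal{L}^{d-1}(w)$ and applying Tonelli's theorem gives
\[
\int_{B_L(x_0)}\mathcal{L}^{d-1}\bigl(A\cap S_t(x)\bigr)\,dx=\int_{S_t(0)}\mathcal{L}^d\bigl(A\cap B_L(x_0+w)\bigr)\,d\mathcal{L}^{d-1}(w).
\]
Fix $\rho''$ with $\rho'<\rho''<\rho(E)$. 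For each $s>0$ the definition of $\rho(E)$ supplies arbitrarily large $L$ and a point $x_0$ with $\mathcal{L}^d(E\cap B_L(x_0))>\rho''\mathcal{L}^d(B_L)$; since $B_L(x_0+w)\supseteq B_{L-Ds}(x_0)$ whenever $|w|\le Ds$, taking $A=E$ and $t=r_i s$ above yields, for every $i$,
\[
\frac{1}{\mathcal{L}^d(B_L)\,\mathcal{L}^{d-1}(S_{r_i s})}\int_{B_L(x_0)}\mathcal{L}^{d-1}\bigl(E\cap S_{r_i s}(x)\bigr)\,dx \;\ge\;\rho''-\eta(L),\qquad \eta(L):=1-(1-Ds/L)^d\xrightarrow[L\to\infty]{}0.
\]
As $\mathcal{L}^{d-1}(E\cap S_{r_i s}(x))/\mathcal{L}^{d-1}(S_{r_i s})\in[0,1]$, Markov's inequality applied to the complementary ratio shows that the ``deficient'' set $\{x\in B_L(x_0):\mathcal{L}^{d-1}(E\cap S_{r_i s}(x))\le\rho'\mathcal{L}^{d-1}(S_{r_i s})\}$ has $\mathcal{L}^d$-measure at most $\tfrac{1-\rho''+\eta(L)}{1-\rho'}\mathcal{L}^d(B_L)$, a fraction strictly below $1$ once $L$ is large. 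All bounds here are uniform over $r_i\in[S,D]$, so the resulting threshold depends only on $E$, $S$, $D$ and the number of spheres discarded — this is the uniformity asserted in the proposition (and, downstream, the uniformity over all $P$ with the given $S$, $D$, $n$); moreover the set of admissible centres it produces has positive $\mathcal{L}^d$-measure.

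\emph{Combining the $m$ spheres — the main obstacle.} A union bound over $i=1,\dots,m$, intersected with $E\cap B_L(x_0)$, produces a valid centre $x\in E$ as soon as $\rho(E)>\tfrac{m}{m+1-\rho'}$, i.e. as soon as $m$ is small compared with $\rho(E)(1-\rho')/(1-\rho(E))$. This proves the proposition in that range, but not in general, and in particular not in the regime in which the proposition is used to prove Theorem~\ref{mainth} (where $\rho'$ is close to $1$ and $m$ is of order $(1-\rho')^{-1}$). The genuinely delicate point — the step I expect to be the real obstacle — is to kill the loss, which in the crude argument grows linearly in $m$, while only allowing $s_0$ to depend on $m$. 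I would attempt this by an iteration on the number of spheres in the spirit of Falconer--Marstrand~\cite{FM86}: assuming that at some large scale $s$ no centre of $E$ makes all $m$ spheres $\rho'$-full, one uses, in addition to the above, the elementary fact that for large $L$ \emph{every} ball of radius $L+Ds$ meets $E$ in at most a fraction $\rho(E)+\e$ of its volume; pigeonholing a fixed index $i^{*}$ and a positive-density subset $E^{*}\subseteq E\cap B_L(x_0)$ all of whose points have $S_{r_{i^{*}} s}(\cdot)$ deficient, one plays the mean value identity off against these two-sided density bounds to contradict the pigeonhole lower bound on $\mathcal{L}^d(E^{*})$. Making such a contradiction effective for \emph{all} $m$, rather than only below a fixed threshold — by re-centring on a fresh near-extremal ball at each stage and carefully tracking the density of the still-good subset so that the per-stage loss is summable — is the part that requires the Falconer--Marstrand-type argument, and is where I expect the bulk of the work to lie.
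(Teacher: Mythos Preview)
Your first-moment computation is correct, and you have correctly identified both the obstacle and its location: the Markov bound yields a bad set of proportion roughly $(1-\rho)/(1-\rho')$ in $B_L$, which is not small, so the union over $m$ spheres only closes when $\rho>m/(m+1-\rho')$. That is strictly weaker than what the proposition asserts (and, in the application with $m=n-1$ and $\rho'=\tfrac{n-2}{n-1}$, would force $\rho>1-\tfrac{1}{(n-1)^2}$ rather than $\rho>1-\tfrac{1}{n-1}$).

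The gap is the missing \emph{second-moment} estimate, and your proposed fix by ``iteration on the number of spheres'' is not the mechanism the Falconer--Marstrand argument actually uses. What the paper does is show that on a ball $B_s$ where $E$ has density close to $\rho$, the function $g_{\e s}(x)=\mathcal{L}^{d-1}(E\cap S_{\e s}(x))$ is \emph{nearly constant}: one computes
\[
\int_{B_s} g_{\e s}(x)^2\,dx \;=\; \int_{E\cap B_s}\int_{E\cap B_s} K_{\e s}^{(d)}(y-z)\,dy\,dz,
\]
where $K_r^{(d)}$ is an explicit $L^1$ kernel (the limit of $\delta^{-2}\mathcal{L}^d(A_{r,r+\delta}(0)\cap A_{r,r+\delta}(v))$ as $\delta\to0$), and then shows by a smoothing argument that this double integral is at most $(A_{\e s}^d)^2\rho^2(1+\eta)\,\mathcal{L}^d(B_s)$ whenever $E$ obeys the two-sided density hypothesis on all sub-balls of radius $\geq \lambda s$. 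Together with the first-moment lower bound (which you already have), Chebyshev's inequality then gives that the bad set $\{x\in B_s: g_{\e s}(x)\le \rho' A^d_{\e s}\}$ has measure at most $\delta\,\mathcal{L}^d(B_s)$ for \emph{any} prescribed $\delta>0$, uniformly over $\e\in[\xi\e_0,\e_0]$. With the per-sphere bad set made arbitrarily small, the union bound over $m$ spheres is trivial: take $\delta<\rho'/m$.

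So the step you flagged as ``the real obstacle'' is resolved not by iterating and re-centring, but by upgrading Markov to Chebyshev via an $L^2$ bound. Your pigeonhole/iteration sketch, as written, does not supply the needed control: knowing that a positive-density subset $E^*\subseteq E$ has deficient $i^*$-spheres gives no immediate contradiction with one-sided density bounds, because the first moment of $g_{r_{i^*}s}$ over $E^*$ is not a priori controlled. The variance bound is precisely what converts ``mean $\approx\rho$'' into ``almost every value $\approx\rho$''.
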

The technical proof of this {proposition} is given in the next section.

Next, we show that, given a pattern $P:=\{x_0, \ldots,x_{n-1}\}\subseteq \R^d$, if the proportion of the set $E$ in every sphere centred in $x_0$ with radius $\|x_i-x_0\|$ is larger than $\frac{n-2}{n-1}$, then there is a rotated copy of $P$ about $x_0$ that is contained in the set $E$.
For {this pattern $P$} we write $r_i:=\| x_i -x_0\|>0$ for  $1 \leq i \leq n-1$.
Let $SO(d)$ be the special orthogonal group of rotations of $\R^d$ about the origin and let $\sigma$ be normalised Haar measure on $SO(d)$.

\begin{lemma}\label{rotation}
Let $E\subseteq \R^d$ be measurable and let $P:=\{x_0, \ldots,x_{n-1}\} \subseteq \R^d$ be a pattern. Suppose that $x_0 \in E$ and
\be\label{mescond}
\mathcal{L}^{d-1} (E\cap S_{r_i}(x_0) ) > \Big(\frac{n-2}{n-1}\Big) \mathcal{L}^{d-1} (S_{r_i}) \quad (1\leq i \leq n\! -\! 1).
\ee
Then there exists $Q\in SO(d)$ such that $Q(P-x_0)+x_0 \subseteq E$, i.e., $P$ may be rotated about $x_0$ so that $x_i\in E$ for all $0\leq i\leq n-1$.
\end{lemma}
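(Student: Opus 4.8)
The plan is to prove the lemma by a first-moment (union bound) argument on the rotation group. For each $i\in\{1,\dots,n-1\}$ set
\[
A_i := \bigl\{Q\in SO(d) : Q(x_i-x_0)+x_0 \in E\bigr\} .
\]
The key point is to identify $\sigma(A_i)$ with the fractional surface measure of $E$ on the sphere $S_{r_i}(x_0)$. To this end, consider the continuous map $\pi_i\colon SO(d)\to\R^d$, $\pi_i(Q)=Q(x_i-x_0)$. Since $SO(d)$ acts transitively and isometrically on the sphere $S_{r_i}(0)$ of radius $r_i=\|x_i-x_0\|$ (the stabiliser of a fixed nonzero vector being a copy of $SO(d-1)$), uniqueness of the invariant probability measure forces the push-forward $(\pi_i)_*\sigma$ to be the normalised uniform surface measure on $S_{r_i}(0)$. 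Composing with the translation by $x_0$, the map $Q\mapsto Q(x_i-x_0)+x_0$ push-forwards $\sigma$ to the normalised surface measure on $S_{r_i}(x_0)$. Because $Q(x_i-x_0)+x_0$ always lies on $S_{r_i}(x_0)$, we have $A_i=\pi_i^{-1}\bigl((E-x_0)\cap S_{r_i}(0)\bigr)$; this set is $\sigma$-measurable (the hypothesis grants $\mathcal{L}^{d-1}$-measurability of $E$ on the relevant spheres, and approximating that intersection by a Borel set up to a surface-null set, then pulling back, shows $A_i$ differs from a Borel set by a $\sigma$-null set), and
\[
\sigma(A_i) \;=\; \frac{\mathcal{L}^{d-1}\bigl(E\cap S_{r_i}(x_0)\bigr)}{\mathcal{L}^{d-1}(S_{r_i})} \;>\; \frac{n-2}{n-1},
\]
the last inequality being exactly hypothesis \eqref{mescond}.

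Next I would apply the union bound to the complements. Since $\sigma\bigl(SO(d)\setminus A_i\bigr) < 1-\frac{n-2}{n-1}=\frac{1}{n-1}$ for each of the $n-1$ indices $i$,
\[
\sigma\Bigl(\bigcup_{i=1}^{n-1}\bigl(SO(d)\setminus A_i\bigr)\Bigr) \;\le\; \sum_{i=1}^{n-1}\sigma\bigl(SO(d)\setminus A_i\bigr) \;<\; (n-1)\cdot\frac{1}{n-1} \;=\; 1 .
\]
Hence $\sigma\bigl(\bigcap_{i=1}^{n-1}A_i\bigr)>0$, so this intersection is non-empty; in fact it is a set of positive $\sigma$-measure of admissible rotations, as anticipated in the remark preceding the lemma. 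Picking any $Q$ in it, we have $Q(x_i-x_0)+x_0\in E$ for $1\le i\le n-1$, and since $x_0\in E$ and $Q$ fixes the origin we also have $Q(x_0-x_0)+x_0=x_0\in E$. Therefore $Q(P-x_0)+x_0\subseteq E$, which is the claim.

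I expect the combinatorial heart — the elementary union bound that makes the threshold $\tfrac{n-2}{n-1}$ exactly match the $n-1$ spheres — to be routine; the only genuinely delicate point is the measure-theoretic bookkeeping of the first paragraph, namely checking $\sigma$-measurability of $A_i$ and computing $\sigma(A_i)$ precisely as the surface-density of $E$ on $S_{r_i}(x_0)$ via the push-forward of Haar measure.
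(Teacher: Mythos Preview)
Your proof is correct and follows essentially the same route as the paper's: identify $\sigma(A_i)$ with the surface density of $E$ on $S_{r_i}(x_0)$ via the push-forward of Haar measure, then apply the union bound on the complements to get $\sigma\big(\bigcap_i A_i\big)>0$. The paper's version is terser (it simply asserts the equality $\sigma\{Q:Q(x_i)\in E\cap S_{r_i}(0)\}=\mathcal{L}^{d-1}(E\cap S_{r_i}(0))/\mathcal{L}^{d-1}(S_{r_i}(0))$ without the measurability discussion), but the argument is the same.
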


\begin{proof}
Without loss of generality take $x_0 = 0$ so that $x_i\in S_{r_i}(0)$ for $1\leq i\leq n-1$. From \eqref{mescond},
$$\sigma \big\{Q\in SO(d): Q(x_i) \in E\cap S_{r_i}(0)\big\} = \frac{\mathcal{L}^{d-1} (E\cap S_{r_i}(0))}{\mathcal{L}^{d-1} (S_{r_i}(0) )} >\frac{n-2}{n-1}.$$
Hence
\begin{align*}
\sigma \big\{Q\in SO(d): Q(x_i) &\in E\cap S_{r_i}(0) \mbox{ for all } 1\leq i \leq n\! -\! 1\big\}\\
&\geq \sigma (SO(d)) -\sum_{i=1}^{n-1} \sigma\big\{Q\in SO(d): Q(x_i) \notin E\cap S_{r_i}(0)\big\}\\
&>1 - (n-1)\Big(1- \frac{n-2}{n-1}\Big) =0.
\end{align*}
Hence there is a set of rotations $Q$ of positive $\sigma$-measure such that $Q(x_i) \in E$ for all $1\leq i\leq n-1$, as required. (Note that this argument remains valid if the $r_i$ are not all distinct.)
\end{proof}

Our main theorem, stating that sets of density greater than $\frac{n-2}{n-1}$ contain all sufficiently large copies of $n$ point patterns, now follows easily.
\bigskip

\begin{proof}[Proof of Theorem \ref{mainth} {assuming Proposition \ref{concenspheres}}]
Taking $\rho' = \frac{n-2}{n-1}$ and $m=n-1$ in Proposition \ref{concenspheres} there is a number $s_0(E,S,D,m)$ such that
for all $s\geq s_0$ there exists $x_0\in E$ such that
$$
\mathcal{L}^{d-1} (E\cap S_{r_i s}(x_0) ) > \Big(\frac{n-2}{n-1}\Big) \mathcal{L}^{d-1} (S_{r_i s})
$$
for all $1\leq i \leq n- 1$, noting that $S\leq r_i \leq D$. Thus, for all $s\geq s_0$, by Lemma \ref{rotation} there is a $Q\in SO(d)$ such that $sQ(P) + x_0 - sQ(x_0) = Q\big(s(P-x_0)\big)+x_0 \subseteq E$.
\end{proof}

\section{Proof of Proposition \ref{concenspheres}}
\label{sec:keyestimates}

{ We first show} that $E$ has mean density not much more than $\rho$ in all sufficiently large balls  but also there exist balls of all large radii where $E$ has mean density close to $\rho$.

\begin{lemma}\label{observation:rho}
Let $E\subseteq \R^d$ be  Lebesgue-measurable with upper Banach density $\rho>0$ and let $\alpha>0$. Then we may find $s_1:=s_1(\alpha,E)$ such that
\be\label{upmd}
\frac{\mathcal{L}^d(E \cap B)}{\mathcal{L}^d(B)}< \rho (1+\alpha),
\ee
for all closed balls $B$ of radii grater than $s_1$.
Furthermore, for all $s>0$, there exists a closed ball $B_s$ such that
\be\label{lomd}
\frac{\mathcal{L}^d(E\cap B_s)}{\mathcal{L}^d(B_s)}>\rho (1-\alpha).
\ee
\end{lemma}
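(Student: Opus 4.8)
The plan is to read off both halves of the statement directly from the definition \eqref{upbden} of the upper Banach density. Write
\[
g(r) := \sup_{x\in\R^d}\frac{\mathcal{L}^d(E\cap B_r(x))}{\mathcal{L}^d(B_r)}\qquad(r>0),
\]
so that $0\le g(r)\le 1$ and $\rho=\limsup_{r\to+\infty}g(r)$. For \eqref{upmd} I would use the ``upper'' side of the $\limsup$: for every $\e>0$ there is an $r_0$ with $g(r)<\rho+\e$ for all $r>r_0$. Applying this with $\e=\rho\alpha$, which is strictly positive because $\rho>0$, and setting $s_1:=r_0$, any closed ball $B$ of radius $r>s_1$ satisfies $\mathcal{L}^d(E\cap B)/\mathcal{L}^d(B)\le g(r)<\rho(1+\alpha)$.

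For \eqref{lomd} I would use the ``lower'' side of the $\limsup$: for every $\e>0$ the inequality $g(r)>\rho-\e$ holds for arbitrarily large $r$. Given $s>0$, take $\e=\rho\alpha$ and choose some $r\ge s$ with $g(r)>\rho(1-\alpha)$; since $g(r)$ is a supremum over centres, there is an $x\in\R^d$ with $\mathcal{L}^d(E\cap B_r(x))/\mathcal{L}^d(B_r)>\rho(1-\alpha)$, and then $B_s:=B_r(x)$, a ball of radius at least $s$, is as required.

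There is essentially no obstacle here: the lemma is little more than a reformulation of the definition of $\rho$. The only two points that merit a word of care are that the hypothesis $\rho>0$ is exactly what makes the error term $\rho\alpha$ genuinely positive, so that both one-sided characterisations of the $\limsup$ are applicable, and that in the second part one must pass from the value of the supremum $g(r)$ to an actual almost-extremal centre $x$, which is immediate from the definition of supremum. Note also that $B_s$ cannot in general be prescribed to have a fixed radius, since $g$ need not exceed $\rho(1-\alpha)$ at every large radius; the natural reading, which is also what is needed later, is merely that $B_s$ may be taken with radius at least $s$.
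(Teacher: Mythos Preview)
Your treatment of \eqref{upmd} is correct and matches the paper's one-line argument. For \eqref{lomd}, however, there is a genuine gap. In the paper's notational convention (stated in the introduction), $B_s$ denotes a ball of radius exactly $s$, and this is also the reading required downstream: in the proof of Lemma~\ref{lemma:bounds} the present lemma is invoked to obtain ``a ball $B_s$ of radius $s$'' satisfying \eqref{eq0}, and the rest of that proof (the comparison of $f_{\e s}$ and $g_{\e s}$ on $B_{s-\e s}$, the annulus estimates, etc.) uses the exact radius $s$ throughout. Your argument only produces a ball of some radius $r\ge s$, and your parenthetical claim that ``$B_s$ cannot in general be prescribed to have a fixed radius, since $g$ need not exceed $\rho(1-\alpha)$ at every large radius'' is in fact false.

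The missing idea is the averaging step the paper carries out. Given $s>0$, choose $r>s$ with $g(r)>\rho(1-\tfrac{\alpha}{2})$ and with $r$ so large that $1-(1-s/r)^d<\tfrac{1}{2}\rho\alpha$; pick a near-optimal centre $x$ for $g(r)$. Then use
\[
\int_{B_r(x)} \mathcal{L}^d\big(E\cap B_s(y)\big)\,dy \ \ge\ \mathcal{L}^d\big(E\cap B_{r-s}(x)\big)\,\mathcal{L}^d(B_s)
\]
together with the trivial annulus bound $\mathcal{L}^d(E\cap B_{r-s}(x))\ge \mathcal{L}^d(E\cap B_r(x))-(1-(1-s/r)^d)\mathcal{L}^d(B_r)$ to conclude that the average of $\mathcal{L}^d(E\cap B_s(y))/\mathcal{L}^d(B_s)$ over $y\in B_r(x)$ exceeds $\rho(1-\alpha)$. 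Hence some $y$ yields a ball of radius exactly $s$ with density greater than $\rho(1-\alpha)$. Incidentally, since $\alpha>0$ was arbitrary, this shows $g(s)\ge\rho$ for \emph{every} $s>0$, directly contradicting your assertion about $g$.
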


\begin{proof}
Inequality \eqref{upmd} is clear from the definition of $\rho$.

For \eqref{lomd}, given $s>0$, we may find $r>s$ such that $1-(1-\frac{s}{r})^d<\frac{1}{2}\rho\alpha $ and  $x\in \R^d$ satisfying \[\frac{\mathcal{L}^d(E \cap B_r(x))}{\mathcal{L}^d(B_r)}>\rho \Big(1-\frac{\alpha}{2}\Big).\]
Then,
 \[ \int_{B_r(x)} \mathcal{L}^d(B_s(y)\cap E) \, dy\geq \mathcal{L}^d(E\cap B_{r-s}(x)) \mathcal{L}^d(B_s) \]
and
\[\mathcal{L}^d(E \cap B_{r-s}(x))\geq \mathcal{L}^d(E\cap B_r(x))- \Big(1-\Big(1-\frac{s}{r}\Big)^d\Big) \mathcal{L}^d(B_r).\]
Hence,
\begin{align*}
\frac{1}{\mathcal{L}^d(B_r)} \int_{B_r(x)}\frac{\mathcal{L}^d(E\cap B_s(y))}{\mathcal{L}^d(B_s)} \,dy
&\geq \frac{\mathcal{L}^d(E \cap B_{r-s}(x))}{\mathcal{L}^d(B_r)}\\
&\geq \frac{\mathcal{L}^d(E \cap B_r(x))}{\mathcal{L}^d (B_r)}- \Big(1-\Big(1-\frac{s}{r}\Big)^d\Big)\\
&>\rho \Big(1-\frac{\alpha}{2}\Big)-\rho\frac{\alpha}{2}=\rho (1-\alpha).
\end{align*}
So, there exists $y \in B_r(x)$ such that $\frac{\mathcal{L}^d(E \cap B_s(y))}{\mathcal{L}^d(B_s)}>\rho (1-\alpha)$.
\end{proof}

We will need to estimate the $(d-1)$-dimensional measure of the intersection of $(d-1)$-spheres with the set $E$. To facilitate this we approximate such spheres by annuli. Let $A_{r_1,r_2}(x) := B_{r_2}(x) \setminus B_{r_1}(x) $ be the $d$-{\em dimensional annulus} of centre $s$, inner radius $r_1$ and outer radius $r_2$. The intersection of pairs of such annuli is key to our calculations, and for $v\in \R^d$ and $\delta >0$ we define
\be \label{phidef}
\phi_{\delta}^{(d)}(v):=\delta^{-2}\mathcal{L}^d(A_{r,r+\delta}(0)\cap A_{r,r+\delta}(v)).
\ee
We will check that the limit as  $\delta \to 0$ of $\phi_{\delta}^{(d)}(v)$ exists pointwise and in $L^1$ and equals the following function $K_r^{(d)}$ which may be thought of as a potential kernel on $\R^d$.

\begin{definition}
For $r>0$ define $K_r^{(d)}: \R^d \to \R$ by
$$
K_r^{(d)}(v):= \left\{ \begin{array}{lcc}
             \displaystyle{\frac{2r^2  \pi^{(d-1)/2} (r^2 - \frac{\|v\|^2}{4})^{(d-3)/2}}{ \Gamma \left(\frac{d-1}{2} \right)\|v\|} }&   \text{if}  & \|v\|< 2r \text{ and } v\neq 0, \\
\vspace{-0.3cm}            \\ 0 &  \text{if}  & \|v\|> 2r, \\
             \\ +\infty &  \text{if}  & \|v\|= 2r \text{ or } v=0,
             \end{array}
   \right.
$$
where $\Gamma$ is the gamma function.
\end{definition}

Throughout we will write $A_{r}^d$ for  the $(d-1)$-dimensional surface area of a ball $B_r \subseteq \R^d$, given by
\be\label{sufarea}
A_{r}^d : = \frac{d\, r^{d-1} \pi^{d/2}}{  \Gamma (\frac{d}{2}+1)}.
\ee

\begin{lemma}\label{lemlim}
For all $r>0$, $\phi_{\delta}^{(d)}\to K_r^{(d)}$ pointwise and in $L^1(\R^d)$. Furthermore,
\be\label{intk}
\int K_r^{(d)}(v) \ dv =  (A_{r}^d)^2.
\ee
\end{lemma}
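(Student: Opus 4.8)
The plan is to prove Lemma~\ref{lemlim} in three stages: first the pointwise convergence, then the $L^1$ convergence, and finally the identity \eqref{intk}.

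\textbf{Pointwise convergence.} Fix $v\in\R^d$. The quantity $\delta^2\phi_\delta^{(d)}(v)=\mathcal{L}^d\big(A_{r,r+\delta}(0)\cap A_{r,r+\delta}(v)\big)$ is the volume of the lens-shaped region lying between the two concentric annular shells. The natural thing to do is to foliate by the sphere $S_\rho(0)$ for $\rho\in[r,r+\delta]$, writing the volume as $\int_r^{r+\delta}\mathcal{L}^{d-1}\big(S_\rho(0)\cap A_{r,r+\delta}(v)\big)\,d\rho$ via the coarea formula (or just polar coordinates). As $\delta\to0$, each factor of $\delta$ should contribute: one $\delta$ from the radial width of the first shell (the outer integral), and one $\delta$ from the radial width of the second shell cutting across $S_\rho(0)$. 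So the key computation is: for $\rho$ close to $r$, $\delta^{-1}\mathcal{L}^{d-1}\big(S_\rho(0)\cap A_{r,r+\delta}(v)\big)$ converges to the $(d-2)$-dimensional surface measure of the circle $S_\rho(0)\cap S_r(v)$, divided by the sine of the angle at which the two spheres meet (the Jacobian factor converting ``radial thickness of the second shell'' into ``normal thickness of the intersection locus''). One then recognizes $S_\rho(0)\cap S_r(v)$ as a $(d-2)$-sphere: when $\|v\|<2r$, $v\neq0$, two spheres of radii $r$ centred at $0$ and $v$ intersect in a $(d-2)$-sphere of radius $\sqrt{r^2-\|v\|^2/4}$, lying in the hyperplane $\{x\cdot v=\|v\|^2/2\}$. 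Computing its $(d-2)$-volume as $A^{d-1}_{\sqrt{r^2-\|v\|^2/4}}$ (using \eqref{sufarea} with $d$ replaced by $d-1$) and dividing by the sine factor $\sqrt{1-\|v\|^2/(4r^2)}$ should, after simplification with the duplication/reflection properties of $\Gamma$, yield exactly $K_r^{(d)}(v)$. The cases $\|v\|>2r$ (empty intersection, so the quantity is eventually $0$) and $\|v\|=2r$ or $v=0$ (the intersection has positive $\mathcal{L}^{d-1}$-measure on $S_r$, forcing the rescaled quantity to blow up) are handled separately and easily.

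\textbf{$L^1$ convergence.} Having pointwise convergence, I would establish $L^1$ convergence by a dominated/uniform-integrability argument together with convergence of the total integrals. Note $\int\phi_\delta^{(d)}(v)\,dv=\delta^{-2}\int\mathcal{L}^d\big(A_{r,r+\delta}(0)\cap A_{r,r+\delta}(v)\big)\,dv=\delta^{-2}\big(\mathcal{L}^d(A_{r,r+\delta})\big)^2$ by Fubini (the integral of an autocorrelation is the square of the integral). Since $\delta^{-1}\mathcal{L}^d(A_{r,r+\delta})\to A^d_r$ as $\delta\to0$, we get $\int\phi_\delta^{(d)}\to (A^d_r)^2$. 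Combined with $\phi_\delta^{(d)}\to K_r^{(d)}$ pointwise and $\phi_\delta^{(d)}\ge0$, Scheff\'e's lemma (or Fatou plus convergence of integrals) gives $\phi_\delta^{(d)}\to K_r^{(d)}$ in $L^1$ provided we also know $\int K_r^{(d)}=(A^d_r)^2$, which is precisely \eqref{intk}; so in fact the $L^1$ statement and \eqref{intk} are proved simultaneously: Fatou gives $\int K_r^{(d)}\le\liminf\int\phi_\delta^{(d)}=(A^d_r)^2$, and then Scheff\'e upgrades this. To get the matching lower bound $\int K_r^{(d)}\ge(A^d_r)^2$ one can either compute $\int K_r^{(d)}$ directly in polar coordinates — the integral $\int_{0}^{2r} \frac{2r^2\pi^{(d-1)/2}(r^2-t^2/4)^{(d-3)/2}}{\Gamma((d-1)/2)\,t}\cdot A^d_t/ \,dt$ type expression — reducing via the substitution $t=2r\sin\theta$ to a Beta integral, or interpret $K_r^{(d)}$ as the density of the self-convolution of normalized surface measure on $S_r$ and invoke that $\|\sigma_{S_r}*\sigma_{S_r}\|_1=\|\sigma_{S_r}\|_1^2=(A^d_r)^2$.

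\textbf{Main obstacle.} The delicate point is the pointwise-limit computation: correctly extracting the factor of $\delta^2$ by disentangling the two independent ``thin shell'' directions, and in particular getting the Jacobian (the $\sin$ of the dihedral angle between the two spheres, equivalently $\sqrt{1-\|v\|^2/(4r^2)}$) in the right place, since this is exactly what produces the $(r^2-\|v\|^2/4)^{(d-3)/2}$ rather than $(d-1)/2$ power in $K_r^{(d)}$. Everything else is bookkeeping with $\Gamma$-function identities and standard measure-theoretic limit theorems. An alternative, perhaps cleaner, route that avoids the geometric Jacobian entirely is to compute $\phi_\delta^{(d)}$ via the Fourier transform: $\mathbf{1}_{A_{r,r+\delta}}*\mathbf{1}_{A_{r,r+\delta}}$ has a clean expression, its value at $v$ equals $\delta^2\phi_\delta^{(d)}(v)$, and sending $\delta\to0$ after rescaling reproduces $\sigma_{S_r}*\sigma_{S_r}$; then $K_r^{(d)}$ is identified as the known closed form for the convolution of two sphere measures in $\R^d$, and \eqref{intk} is immediate from $\widehat{\sigma_{S_r}*\sigma_{S_r}}(0)=\widehat{\sigma_{S_r}}(0)^2=(A^d_r)^2$.
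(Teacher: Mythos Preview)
Your proposal is correct and follows essentially the same route as the paper: a geometric computation of the pointwise limit (the paper does the $d=2$ rhombus case first and then rotates it around the $(d-2)$-sphere of intersection, which is exactly your coarea/Jacobian argument in slightly different packaging), followed by the observation $\int\phi_\delta^{(d)}=\delta^{-2}\mathcal{L}^d(A_{r,r+\delta})^2\to(A_r^d)^2$, a direct Beta-integral evaluation of $\int K_r^{(d)}$ (the paper uses the substitution $\rho=2rt^{1/2}$ where you suggest $t=2r\sin\theta$; both land on $\beta(\tfrac{d-1}{2},\tfrac{d-1}{2})$), and then Scheff\'e to conclude $L^1$ convergence. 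Your alternative Fourier/convolution route---identifying $K_r^{(d)}$ as the density of $\sigma_{S_r}\ast\sigma_{S_r}$ and reading off \eqref{intk} from $\widehat{\sigma_{S_r}}(0)^2$---is a genuinely different and somewhat slicker way to get \eqref{intk}, bypassing the explicit Beta/Gamma bookkeeping, though it still requires knowing (or deriving) the closed form of the sphere--sphere convolution to match $K_r^{(d)}$ pointwise.
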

\begin{proof}
Pointwise convergence is trivial if $v=0$ or $\|v\| \geq 2r$.

For $0<\|v\|<2r$ first consider the case when $d=2$. The circles $C_r(0)$ and $C_r(v)$ intersect at angle $\theta$ where $\sin\frac{\theta}{2}=\|v\|/2r$. Then for  small $\delta>0$, $A_{r, r+\delta}(0) \cap A_{r, r+\delta}(v)$ is a pair of regions, each close to a rhombus of side $\delta/\sin \theta$ and height $\delta$, so of area  $\delta^2 /\sin \theta$. Hence,
\begin{align*}
\mathcal{L}^2\{A_{r, r+\delta}(0) \cap A_{r, r+\delta}(v)\}&=2\frac{\delta^2}{\sin\theta} + O(\delta^3)=\frac{2\delta^2}{2 \sin\frac{\theta}{2} \cos\frac{\theta}{2}}+ O(\delta^3)\\
&=\frac{\delta^2}{\frac{\|v\|}{2r}\left(1-\frac{\|v\|^2}{(2r)^2} \right)^{1/2}}+ O(\delta^3)=\frac{2\delta^2  r^2}{\|v\| \left( r^2- \frac{\|v\|^2}{4}\right)^{1/2}}+ O(\delta^3)\\
&=\delta^2 K_r^{(2)}(v)+ O(\delta^3),
\end{align*}
noting that $\Gamma(\frac{1}{2}) = \pi^{\frac{1}{2}}$, so pointwise convergence at $v$ when $d=2$ follows noting \eqref{phidef}.

For  $d\geq 3$, we use the half of the estimate when $d=2$, rotating one of the two approximate rhombii.
Let $G_r:=r S^{d-1} \cap (r S^{d-1} + v)$ where $S^{d-1}$ is the unit $(d-1)$-sphere centred at $0$, so $G_r$  is the $(d-2)$-sphere $\tilde{r}S^{d-2}+\frac{1}{2}v$ of radius $\tilde{r}:=\big(r^2 - \frac{\|v\|^2}{4}\big)^{1/2}$ which is contained in the hyperplane $\langle v \rangle^{\perp} +\frac{1}{2}v$.
Then \[\mathcal{L}^{d-2}(G_r)=\mathcal{L}^{d-2}(\tilde{r}S^{d-2})=\frac{2 \pi^{(d-1)/2} \,\tilde{r}^{d-2}}{\Gamma \left(\frac{d-1}{2} \right)}.\]
For $0<\|v\|<2r$,
\begin{align*}
\mathcal{L}^d\{A_{r, r+\delta}(0) \cap A_{r, r+\delta}(v)\}
&=\bigg( \delta^2 \frac{K_r^{(2)}(v)}{2} + O(\delta^3)\bigg)\mathcal{L}^{d-2}(G_r)\\
&=  \delta^2\frac{K_r^{(2)}(v)}{2} \frac{2 \pi^{(d-1)/2}\, \tilde{r}^{d-2}}{\Gamma \left(\frac{d-1}{2} \right)} + O(\delta^3)\\
&= \delta^2 \frac{K_r^{(2)}(v)}{2} \frac{2 \pi^{(d-1)/2} \big(r^2 - \frac{\|v\|^2}{4}\big)^{(d-2)/2}}{\Gamma \left(\frac{d-1}{2} \right)} + O(\delta^3)\\
&=  \delta^2K_r^{(d)}(v) +  O(\delta^3)
\end{align*}
again giving convergence at $v$.

Pointwise convergence of $\phi_{\delta}^{(d)}(v)$ is not uniform, but to establish $L^1$ convergence it is enough to check that $\|\phi_{\delta}^{(d)}\|_{1} \to \|K_r^{(d)}\|_{1}$. Noting that  $\int\mathcal{L}^d(A\cap (B+v)) dv = \mathcal{L}^d(A) \mathcal{L}^d(B)$ for measurable $A,B \subseteq \R^d$, from \eqref{phidef}
\begin{align}
\int \phi_{\delta}^{(d)}(v) \ dv
&= \delta^{-2} \mathcal{L}^d(A_{r,r+\delta}(0))^2\nonumber\\
&=\delta^{-2}(\delta A_r^d +O(\delta^2))^2 \nonumber\\
&=(A_r^d)^2 +O(\delta). \label{phicon}
\end{align}
Using spherical coordinates,
\begin{align}
\int K_r^{(d)}(v) \ dv &=  \frac{2 r^2 \pi^{(d-1)/2}}{\Gamma \left(\frac{d-1}{2} \right)} \left( \int_0^{2r}\Big(r^2-\frac{\rho^2}{4}\Big)^{(d-3)/2}\rho^{d-2} \, d\rho \right)\! A_1^d\nonumber\\
&=\frac{2 r^2 \pi^{(d-1)/2}}{\Gamma \left(\frac{d-1}{2} \right)} 2^{d-2} r^{2d-4}\left( \int_0^{1}(1-t)^{(d-3)/2}\, t^{(d-3)/2} \, dt \right) \! A_1^d \label{int1}\\
&=\frac{2 r^2 \pi^{(d-1)/2}}{\Gamma \left(\frac{d-1}{2} \right)} 2^{d-2} r^{2d-4}\left(\frac{\Gamma (\frac{d-1}{2})^2}{(d-2)!}\right) \! A_1^d   \nonumber\\
&=\frac{d\pi^{d- 1/2} 2^{d-1} r^{2d-2}}{\Gamma (\frac{d}{2}+1)} \frac{\Gamma (\frac{d-1}{2})}{(d-2)!} \label{int3}\\
&= \frac{d^2 r^{2d-2} \pi^d}{ \left( \Gamma (\frac{d}{2}+1)\right)^2}=  (A_{r}^d)^2,  \label{int4}
\end{align}
where we have used the substitution $\rho = 2rt^{1/2}$ to get the integral form of the beta function  $\beta(\frac{d-1}{2},\frac{d-1}{2})$ at \eqref{int1}, followed by  \eqref{sufarea} at \eqref{int3}, and the factorial form of the gamma function at multiples of $\frac{1}{2}$ to get \eqref{int4}.
From \eqref{phicon},
$\|\phi_{\delta}^{(d)}\|_{1} \to \|K_r^{(d)}\|_{1}$ which, together with pointwise convergence, implies that $\phi_{\delta}^{(d)}\to K_r^{(d)}$ in $L^1$.
\end{proof}

For $r>0$ write
$$
g_r(x):=\mathcal{L}^{d-1}(E\cap S_r(x))  \quad (x \in \R^d)
$$
for the measure of intersection of the set $E\subseteq \R^d$ with the sphere $S_r(x)$. The next lemma enables us to find the mean and mean square of $g_r$.

\begin{lemma}\label{lemma:kernelDdim}
Let $E$ be a bounded Lebesgue-measurable subset of $\R^d$ and let $r>0$. Then
\be\label{gr}
\int g_r(x) \, dx= A_{r}^d \, \mathcal{L}^d(E).
\ee
and
\be\label{grsq}
\int g_r(x)^2 \ dx= \int_E \int_E K_r(y-z) \,dy \, dz.
\ee
\end{lemma}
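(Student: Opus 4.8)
The plan is to establish the two identities separately, with \eqref{gr} serving as the elementary warm-up and \eqref{grsq} as the substantive part that draws on the annulus/kernel machinery of Lemma \ref{lemlim}.

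For \eqref{gr}, I would approximate the sphere $S_r(x)$ by the thin annulus $A_{r,r+\delta}(x)$. Since $\mathcal{L}^{d-1}(E\cap S_r(x))$ is (for a.e.\ $x$, by the coarea formula or a Fubini slicing argument) the pointwise limit of $\delta^{-1}\mathcal{L}^d\big(E\cap A_{r,r+\delta}(x)\big)$ as $\delta\to 0$, I would write
\[
\int \delta^{-1}\mathcal{L}^d\big(E\cap A_{r,r+\delta}(x)\big)\,dx
= \delta^{-1}\,\mathcal{L}^d(E)\,\mathcal{L}^d\big(A_{r,r+\delta}(0)\big)
= \delta^{-1}\,\mathcal{L}^d(E)\,\big(\delta A_r^d + O(\delta^2)\big),
\]
using the translation-invariance identity $\int \mathcal{L}^d\big(E\cap(B+x)\big)\,dx = \mathcal{L}^d(E)\,\mathcal{L}^d(B)$ already invoked in the proof of Lemma \ref{lemlim}. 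Letting $\delta\to 0$ and justifying the interchange of limit and integral (dominated convergence, using that $E$ is bounded so everything is supported in a fixed large ball) gives \eqref{gr}.

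For \eqref{grsq}, the idea is the same but applied to the square. Writing $g_r(x) = \lim_{\delta\to 0}\delta^{-1}\mathcal{L}^d\big(E\cap A_{r,r+\delta}(x)\big)$ and expanding the square as a double integral over $E\times E$,
\[
\delta^{-2}\mathcal{L}^d\big(E\cap A_{r,r+\delta}(x)\big)^2
= \delta^{-2}\int_E\int_E \mathbb{1}_{A_{r,r+\delta}(x)}(y)\,\mathbb{1}_{A_{r,r+\delta}(x)}(z)\,dy\,dz,
\]
I would integrate in $x$ and swap the order of integration (Fubini, all sets bounded) to obtain
\[
\int \delta^{-2}\mathcal{L}^d\big(E\cap A_{r,r+\delta}(x)\big)^2\,dx
= \int_E\int_E \delta^{-2}\mathcal{L}^d\big(A_{r,r+\delta}(0)\cap A_{r,r+\delta}(y-z)\big)\,dy\,dz
= \int_E\int_E \phi_\delta^{(d)}(y-z)\,dy\,dz,
\]
where the inner integral over $x$ is again evaluated by translation-invariance: $\int \mathbb{1}_{A+x}(y)\mathbb{1}_{A+x}(z)\,dx = \mathcal{L}^d\big((A+y)\cap(A+z)\big)$ with $A = A_{r,r+\delta}(0)$, which depends only on $y-z$. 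By Lemma \ref{lemlim}, $\phi_\delta^{(d)}\to K_r^{(d)}$ in $L^1(\R^d)$, and since $E$ is bounded the difference $y-z$ ranges over a bounded set, so $\int_E\int_E \phi_\delta^{(d)}(y-z)\,dy\,dz \to \int_E\int_E K_r^{(d)}(y-z)\,dy\,dz$ (this is a convolution-type estimate: $\big|\int_E\int_E(\phi_\delta^{(d)}-K_r^{(d)})(y-z)\big| \le \mathcal{L}^d(E)\,\|\phi_\delta^{(d)}-K_r^{(d)}\|_1$). Simultaneously the left-hand side converges to $\int g_r(x)^2\,dx$, giving \eqref{grsq}.

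The main obstacle I anticipate is the rigorous justification of passing to the limit $\delta\to 0$ on the left-hand sides, i.e.\ showing that $\delta^{-1}\mathcal{L}^d\big(E\cap A_{r,r+\delta}(x)\big) \to g_r(x)$ in a strong enough sense (pointwise a.e.\ plus uniform integrability, or $L^1$/$L^2$ convergence in $x$). This requires knowing that $x\mapsto \mathcal{L}^{d-1}(E\cap S_r(x))$ is a well-defined measurable function and that the radial slicing behaves well — a consequence of the coarea formula applied to the map $y\mapsto \|y-x\|$, or of polar coordinates combined with Lebesgue differentiation. One clean route is to prove the identities first for $E$ a finite union of boxes (or an open set), where the slicing is transparent, and then pass to general bounded measurable $E$ by approximation in $\mathcal{L}^d$-measure, checking that both sides are continuous under such approximation — the right-hand side of \eqref{grsq} because $K_r^{(d)}\in L^1$, and the left-hand side via the $L^1$ bound on $g_r$ coming from \eqref{gr}. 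The algebraic content, by contrast, is entirely contained in the translation-invariance identity and Lemma \ref{lemlim}, so no new computation is needed there.
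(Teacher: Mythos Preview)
Your proposal is correct and follows essentially the same route as the paper: both identities are obtained by writing the annulus approximation, using the translation-invariance identity to produce $\phi_\delta^{(d)}(y-z)$, and then invoking the $L^1$ convergence $\phi_\delta^{(d)}\to K_r^{(d)}$ from Lemma~\ref{lemlim}. The one technical difference is in how the limit $\delta\to 0$ on the left-hand side is justified: the paper first proves both identities for a continuous compactly supported $g$ in place of $\chi_E$ (so that $\delta^{-1}\int_{A_{r,r+\delta}(x)}g\to\int_{S_r(x)}g$ is immediate), and only afterwards approximates $\chi_E$ by such $g$; this sidesteps the measurability/coarea issue you flag as the main obstacle, and is slightly cleaner than arguing the annulus-to-sphere convergence directly for $\chi_E$.
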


\begin{proof}
Let $g \in L^1(\R^d)$ be continuous and of compact support.  Then,
\begin{align*}
\int \bigg( \int_{A_{r, r+\delta}(x)} g(v)\ dv \bigg) dx &=\iint \chi_{A_{r, r+\delta}(0)}(v-x)g(v) \, dv \, dx\\
&=\iint \chi_{A_{r, r+\delta}(0)}(u) g(x+u) \, du \, dx\\
&=\int \chi_{A_{r, r+\delta}(0)}(u) \, du  \int g(y) \, dy\\
&=\left(\delta A_{r}^d+O(\delta^2) \right)\int g(y) \,dy.
\end{align*}
Dividing by $\delta$ and letting $\delta \to 0$,
\[\int \bigg( \int_{S_r(x)} g(v) \, d\mathcal{L}^{d-1}(v) \bigg) \, dx= A_{r}^d \int g(y) \,dy,\]
where the left-hand side inner integral is with respect to  $(d-1)$-dimensional Lebesgue measure on the sphere.
Identity \eqref{gr} follows on approximating $\chi_E$ by continuous functions $g$.

Now let $g,  h \in L^1(\R^d)$ be bounded and of compact support with $g$ continuous. Then,
\begin{align*}
\int \bigg[ \int g(y) h(y-x) \ dy \bigg]^2 \ dx&= \iiint g(y) h(y-x) g(z) h(z-x) \ dx \ dy \ dz\\
&=\iint g(y) g(z)  \bigg( \int h(u) h(u-y+z) \ du \bigg) \, dy \, dz
\end{align*}
Taking $h(u):=\delta^{-1} \chi_{A_{r, r+\delta}(0)}(u)$,
\begin{align*}
\int \bigg[ \delta^{-1} \int_{A_{r, r+\delta}(x)} g(y) \, dy \bigg]^2 \ dx&= \iint g(y) g(z)\,  \delta^{-2} \mathcal{L}^d\{A_{r, r+\delta}(0) \cap A_{r, r+\delta}(y-z)\}  \,dy \, dz \nonumber\\
&= \iint g(y) g(z) \phi_{\delta}(y-z)\nonumber  \, dy \,dz.
\end{align*}
Letting $\delta \searrow 0$ then, as $g$ is continuous, $\frac{1}{\delta}\int_{A_{r, r+\delta}(x)} g \to \int_{S_r(x)} g$ and
$\phi_{\delta} \to K_r^{(d)} \text{ in } L^1(\R^d)$ by Lemma \ref{lemlim},
\[\int \bigg( \int_{S_r(x)} g(y)\, dy \bigg)^2 \, dx= \iint g(y) g(z) K_r^{(d)}(y-z) \,dy \,dz.\]
Again, approximating $\chi_E$ by continuous $g$ gives \eqref{grsq}.
\end{proof}

The next lemma provides a good upper bound for the right-hand integral of \eqref{grsq} when  $E$ is reasonably uniformly distributed across a region.

\begin{lemma}\label{lemma:intkerboundDdim}
Let $\delta >0, 0<\e_0 <1$ and $0<\xi\leq 1$ be given. Then there exists $\lambda:=\lambda (\e_0, \xi, \delta) \in (0, \e_0)$ such that if $B_s\subseteq \R^d$ is any ball of radius $s>0$ and $E \subseteq B_s$ is any measurable set such that
\be\label{dens}
\frac{\mathcal{L}^d(E \cap B)}{\mathcal{L}^d(B)}< \rho (1+\alpha)
\ee
 for all balls $B \subseteq B_s$ of radius at least $\lambda s$, then for all $\e\in [\xi\e_0,\e_0]$,
$$
\int_E \int_E K_{\e s}^{(d)}(x-y)\ dx \ dy< (A_{\e s}^{d} )^2 (1+\e)^d \big((1+\alpha)^2 \rho^2 +\delta\big) \mathcal{L}^d(B_s).
$$
\end{lemma}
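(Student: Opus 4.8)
Write $r=\e s$. The plan is to prove the pointwise bound
\[
\Phi(y):=\int_E K_{r}^{(d)}(x-y)\,dx\ \le\ (A_{r}^{d})^2\big(\rho(1+\alpha)+\beta\big)\qquad(y\in B_s),
\]
where $\beta=\beta(\delta,\e_0,\xi)>0$ tends to $0$ as the auxiliary parameters below are sent to $0$, and then to integrate over $y\in E$. Applying the hypothesis to the ball $B=B_s$ itself gives $\mathcal L^d(E)<\rho(1+\alpha)\mathcal L^d(B_s)$, so
\[
\int_E\int_E K_{r}^{(d)}(x-y)\,dx\,dy=\int_E\Phi(y)\,dy\ \le\ (A_{r}^{d})^2\big(\rho(1+\alpha)+\beta\big)\rho(1+\alpha)\,\mathcal L^d(B_s),
\]
and once $\beta<\delta$ this is $<(A_{r}^{d})^2(1+\e)^d\big((1+\alpha)^2\rho^2+\delta\big)\mathcal L^d(B_s)$, using $(1+\e)^d\ge1$ and the fact that we may assume $\rho(1+\alpha)\le1$ (if $\rho(1+\alpha)>1$ the conclusion is immediate from $\int_E\int_E K_{r}^{(d)}\le(A_{r}^{d})^2\mathcal L^d(E)\le(A_{r}^{d})^2\mathcal L^d(B_s)$ together with $\int K_{r}^{(d)}=(A_{r}^{d})^2$, which is Lemma~\ref{lemlim}). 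Thus $\lambda$ is to be chosen small enough to force $\beta<\delta$. (This route would in fact give the bound with $(1+\e)^d$ replaced by $1$.)

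For the pointwise bound I would use the layer-cake identity $\Phi(y)=\int_0^\infty\mathcal L^d\big(E\cap\{x:K_{r}^{(d)}(x-y)>\tau\}\big)\,d\tau$ together with the shape of the superlevel sets. A one-line differentiation of the explicit formula for $K_{r}^{(d)}$ shows that for $d\ge3$ it is strictly decreasing in $\|v\|$ on $(0,2r)$, so its superlevel sets are balls $B_{\rho_\tau}(y)$ with $\rho_\tau\le2r$; for $d=2$ the same computation shows $K_{r}^{(2)}$ is ``U-shaped'', with minimum value $2$ attained at $\|v\|=\sqrt2\,r$, so for $\tau\ge2$ the superlevel set is a disjoint union $B_{a_\tau}(y)\sqcup\big(B_{2r}(y)\setminus\overline{B_{b_\tau}(y)}\big)$ of a ball and an outer annulus, while for $\tau<2$ it is all of $B_{2r}(y)$. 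The key input is the estimate: for every $y\in B_s$ and every $\rho$ with $\lambda_1 s\le\rho\le 2r$ (here $\lambda_1$ is an auxiliary threshold, to be fixed first),
\[
\mathcal L^d\big(E\cap B_\rho(y)\big)\ \le\ \big(\rho(1+\alpha)+\eta\big)\,\mathcal L^d(B_\rho),
\]
together with its analogue for an outer annulus of width at least $c_*\lambda s$. For the ball estimate one observes that $U:=B_\rho(y)\cap B_s$ is convex, has diameter at most $2\rho$, and has inradius at least $\rho/2$ (an inscribed ball of radius $\rho/2$ can be centred on the segment from $y$ towards the centre of $B_s$); being of bounded aspect ratio, $U$ can --- by a covering lemma that itself follows from the Vitali covering theorem applied to the unit cube, plus the elementary fact that for a convex body the cubes of a fine grid meeting its boundary have small total measure --- be covered, up to a set of measure $\le\eta\,\mathcal L^d(U)$, by finitely many pairwise disjoint closed balls contained in $U\subseteq B_s$, each of radius $\ge\lambda s$; applying the hypothesis to each of these balls and using disjointness gives the displayed bound. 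The outer-annulus case is treated the same way after partitioning an annulus of width $w$ into $\sim 1/w$ congruent annular sectors, each of bounded aspect ratio and inradius of order $w$, and collecting the balls from all sectors.

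It then remains to split $\Phi(y)$ according to the layer-cake variable. The part coming from superlevel sets whose ``ball scale'' is $<\lambda_1 s$ --- and, when $d=2$, the part of an outer annulus within $c_*\lambda s$ of $\|v\|=2r$ --- is estimated using only $\mathcal L^d(E\cap B)\le\mathcal L^d(B)$; by Lemma~\ref{lemlim} and scaling it contributes at most $\big(\kappa(\lambda_1/(\xi\e_0))+\kappa'(c_*\lambda/(\xi\e_0))\big)(A_{r}^{d})^2$, where $\kappa(t)=\int_{\|v\|<t}K_1^{(d)}\,dv/(A_1^d)^2\to0$ and $\kappa'(t)\to0$ as $t\to0$ (the latter using the integrable singularity of $K_{r}^{(2)}$ at $\|v\|=2r$). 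The remaining part is bounded, via the key estimate, by $\big(\rho(1+\alpha)+\eta\big)\int_0^\infty\mathcal L^d\big(\{K_{r}^{(d)}>\tau\}\big)\,d\tau=\big(\rho(1+\alpha)+\eta\big)(A_{r}^{d})^2$; for $d=2$ one checks that the ``bottom slab'' $\tau<2$, the inner-ball part and the outer-annulus part reassemble precisely to $\int K_{r}^{(2)}=(A_{r}^{2})^2$. Hence $\beta\le\eta+\kappa(\lambda_1/(\xi\e_0))+\kappa'(c_*\lambda/(\xi\e_0))$, and one fixes $\lambda_1$ so small that the first $\kappa$-term is $<\delta/3$, then $\eta<\delta/3$ (which pins down $c_*$), and finally $\lambda$ so small that the balls produced by the covering lemma genuinely have radius $\ge\lambda s$ (this needs only that $\lambda$ be small relative to $\lambda_1$ and $\eta$, since the inradii in play are of order at least $\lambda_1 s$) and the $\kappa'$-term is $<\delta/3$. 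The two points requiring real care are the non-monotonicity of $K_{r}^{(2)}$ near $\|v\|=2r$, which forces the outer-annulus bookkeeping, and the fact that $B_\rho(y)$ need not lie inside $B_s$ when $y$ is near $\partial B_s$; the latter is exactly why the covering lemma for convex sets is invoked rather than a direct application of the hypothesis, and I expect it to be the subtler of the two, essentially the point where the quantitative version of the Falconer--Marstrand argument lives.
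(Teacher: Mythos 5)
Your route is correct in outline but genuinely different from the paper's, which sidesteps covering lemmas entirely via mollification. The paper rescales to $s=1$, sets $h_\lambda:=\mathcal{L}^d(B_\lambda)^{-1}\chi_{B_\lambda(0)}$, and chooses $\lambda$ (using continuity in $\e$ of the relevant double integral together with compactness of $[\xi\e_0,\e_0]$) so small that $K_\e^{(d)}(x-y)$ is within $\eta$ in $L^1(B_1(0)\times B_1(0))$ of its double mollification $\iint K_\e^{(d)}(z-w)h_\lambda(x-z)h_\lambda(y-w)\,dz\,dw$; restricting $(x,y)$ to $E\times E$, the density hypothesis then enters through $\int_E h_\lambda(x-z)\,dx=\mathcal{L}^d(E\cap B_\lambda(z))/\mathcal{L}^d(B_\lambda)$, and \eqref{intk} closes the estimate. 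Your layer-cake decomposition of $K_r^{(d)}$ by superlevel sets, combined with a Vitali-type covering of the convex bounded-aspect-ratio region $B_\rho(y)\cap B_s$, is a viable alternative and even yields a slightly stronger conclusion (a pointwise bound on $\Phi(y)$ and no $(1+\e)^d$ factor); your endgame arithmetic, the $\rho(1+\alpha)>1$ caveat, the monotonicity of $K_r^{(d)}$ for $d\ge3$, and the identification of the minimum at $\|v\|=\sqrt{2}\,r$ when $d=2$ are all correct. The price you pay is the covering lemma, which you only assert: that every convex body of bounded aspect ratio can be exhausted up to an $\eta$ fraction by finitely many \emph{disjoint} balls of radius at least $a(\eta)$ times the inradius, with $a(\eta)$ uniform over such bodies. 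That lemma is true (grid the body, discard the thin boundary layer by convexity, and inside each interior grid cube use one fixed finite Vitali selection from a reference unit cube, rescaled, so that $a(\eta)$ is fixed once and for all), but it is the genuine work in your argument, and --- contrary to your closing guess --- it is \emph{not} where the paper's or Falconer--Marstrand's quantitative argument lives, since the mollification route avoids it altogether. The $d=2$ outer-annulus bookkeeping is also somewhat fiddlier than you suggest, since an annulus (and its intersection with $B_s$) is not convex, though your annular-sector decomposition should handle it. On balance both proofs are correct; the paper's is shorter because it never has to produce disjoint balls.
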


\begin{proof}
By applying a similarity transformation it is enough to prove the lemma in the special case $s=1$ and $B_s=B_1(0)$.
For each $0<\lambda<1$ and $u\in \R^d$ we define $h_{\lambda}(u):=\frac{1}{\mathcal{L}^d (B_{\lambda})} \chi_{B_\lambda(0)}(u)$.

Let $\eta:=\delta (A^d_{\xi\e_0})^2 (1+\xi\e_0)^d \mathcal{L}^d(B_1(0))$.
Choose $\lambda:=\lambda (\e_0, \xi, \delta) \in (0,\e_0)$ sufficiently small so that  for all  $\e\in [\xi\e_0,\e_0]$,
\be\label{happrox}
\int_{B_1(0)} \int_{B_1(0)} \bigg| K_{\e}^{(d)}(x-y) - \iint K_{\e}^{(d)}(z-w) h_{\lambda}(x-z) h_{\lambda}(y-w) \, dz \, dw \bigg| \, dx \, dy < \eta.
\ee
To achieve this, note that the double integral is continuous in $\e$, for example using that $K_{\e'}^{(d)}$ converges to $K_{\e}^{(d)}$ as $\e'\to\e$ pointwise almost everywhere and in $L^1(B_1(0)\times B_1(0))$. We can find a value of $\lambda$ such that \eqref{happrox} is satisfied for each $\e\in [\xi\e_0,\e_0]$ so compactness enables a choice of $\lambda$ valid for all such $\e$.

Let $E \subseteq B_1(0)$ be a measurable set such that \eqref{dens}  holds for all balls $B \subseteq B_1(0)$ of radius at least $\lambda$.
Then, for all $\e\in [\xi\e_0,\e_0]$, restricting the domain of integration in \eqref{happrox}  to  $E\times E \subseteq B_1(0)\times B_1(0)$, we get
\begin{align*}
\int_E \int_E K_{\e}^{(d)}(x-y) \ dx \ dy &< \eta +\int_E \int_E \iint K_{\e}^{(d)}(z-w)h_{\lambda}(x-z) h_{\lambda}(y-w)\, dz \, dw \, dx \, dy\\
&< \eta+ \rho^2 (1+\alpha)^2 \int_{B(0, 1+\lambda)} \int_{B(0, 1+\lambda)} K_{\e}^{(d)}(z-w) \, dz \, dw\\
&\leq \eta+ \rho^2 (1+\alpha)^2 \int_{B(0, 1+\e)} (A_\e^{d})^2  \, dw\\
&\leq (A_\e^{d})^2  (1+\e)^d ((1+\alpha)^2 \rho^2 +\delta) \mathcal{L}^d(B_1(0)),
\end{align*}
where we used \eqref{dens} with the definition of $h_\lambda$, that $\lambda \leq\e$, the integral \eqref{intk}, and the definition of $\eta$.
\end{proof}

The next lemma shows that we can find a ball $B_s$ in which $E$ has mean density close to $\rho$ but also with good estimates for proportions of the surfaces of smaller spheres that intersect $E$. We will then use \eqref{bs2} and \eqref{bs3} to show that $g_{\e s}$ is nearly constant across $B_s$. Recall that
$$ g_r(x):=\mathcal{L}^{d-1}(E\cap S_r(x))  \quad (x \in \R^d).$$

\begin{lemma}\label{lemma:bounds}
Let $E\subseteq \R^d$ be a Lebesgue-measurable set of upper Banach density $\rho>0$, and let  $0< \xi\leq 1$. Then, given $\eta \in (0,1)$, we can find $\e_0>0$ and $s_0>0$ such that for each $s>s_0$ there is a ball $B_s\subseteq \R^d$ of radius $s$ satisfying
\be\label{bs1}
\frac{\mathcal{L}^d(E \cap B_s)}{\mathcal{L}^d(B_s)}> \rho (1-\eta),
\ee
\be\label{bs2}
\frac{\int_{B_s}g_{\e s}(x)\, dx}{\mathcal{L}^d(B_s)} > A^d_{\e s}\, \rho(1-\eta)
\ee
and
\be\label{bs3}
\frac{\int_{B_s}g_{\e s}(x)^2\, dx}{\mathcal{L}^d(B_s)}<  (A^d_{\e s})^2 \rho^2 (1+\eta),
\ee
for all $\e\in [\xi\e_0,\e_0]$.
\end{lemma}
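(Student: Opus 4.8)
The plan is to combine the two halves of Lemma~\ref{observation:rho} with Lemma~\ref{lemma:kernelDdim} and Lemma~\ref{lemma:intkerboundDdim}, choosing parameters carefully in terms of the target $\eta$. First I fix $\eta\in(0,1)$ and work backwards: I want \eqref{bs1}, \eqref{bs2} and \eqref{bs3} to hold, so I need a ball $B_s$ in which $E$ has density close to $\rho$ (for \eqref{bs1} and \eqref{bs2}) and in which $E$ is simultaneously not too dense in \emph{all} sub-balls down to some scale $\lambda s$ (so that Lemma~\ref{lemma:intkerboundDdim} gives \eqref{bs3}). The first half of Lemma~\ref{observation:rho} handles the upper density condition on small sub-balls automatically once the radii exceed the absolute threshold $s_1(\alpha,E)$; the second half produces balls of every large radius in which the mean density exceeds $\rho(1-\alpha')$. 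The subtlety is that I must pick the \emph{ball radius} $s$ large enough that even $\lambda s \geq s_1(\alpha,E)$, where $\lambda=\lambda(\e_0,\xi,\delta)$ comes out of Lemma~\ref{lemma:intkerboundDdim}; this forces the order of quantifier choices.

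Concretely, I would proceed as follows. Given $\eta$, first choose $\e_0>0$ small enough that $(1+\e_0)^d(1+\eta/4)< 1+\eta$ — this absorbs the $(1+\e)^d$ factor appearing in Lemma~\ref{lemma:intkerboundDdim}. Then choose $\alpha>0$ small enough that $(1+\alpha)^2< 1+\eta/8$ and also small enough (shrinking $\alpha\le\eta$) that the density bounds $\rho(1\pm\alpha)$ are within the tolerances needed for \eqref{bs1} and \eqref{bs2}; set $\delta:=\eta\rho^2/8$ so that $(1+\alpha)^2\rho^2+\delta<\rho^2(1+\eta/4)$. Feed $(\e_0,\xi,\delta)$ into Lemma~\ref{lemma:intkerboundDdim} to obtain $\lambda=\lambda(\e_0,\xi,\delta)\in(0,\e_0)$. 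Now set $s_0:=s_1(\alpha,E)/\lambda$, so that for every $s>s_0$ all sub-balls $B\subseteq B_s$ of radius at least $\lambda s$ have radius exceeding $s_1(\alpha,E)$, hence satisfy the upper density bound \eqref{dens} by the first half of Lemma~\ref{observation:rho}.

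For a given $s>s_0$, apply the second half of Lemma~\ref{observation:rho} (with its $\alpha$ taken to be my $\alpha$, possibly after a harmless further shrink so the same symbol does duty everywhere) to obtain a ball $B_s$ of radius $s$ with $\mathcal{L}^d(E\cap B_s)/\mathcal{L}^d(B_s)>\rho(1-\alpha)\ge\rho(1-\eta)$; replacing $E$ by $E\cap B_s$ throughout costs nothing since spheres $S_{\e s}(x)$ for $x\in B_s$ and $\e\le\e_0<1$ stay within a bounded neighbourhood, and only decreases $g_{\e s}$. This gives \eqref{bs1} directly. For \eqref{bs2}, integrate $g_{\e s}$ over $B_s$: by \eqref{gr} of Lemma~\ref{lemma:kernelDdim} applied to the bounded set $E\cap B_s$ (and noting $\int_{B_s}g_{\e s}\ge\int g_{\e s}-(\text{contribution of spheres not meeting }B_s)$, which one controls by the same $1-(1-\e)^d$-type estimate as in Lemma~\ref{observation:rho}, absorbing the loss into $\eta$), one gets $\int_{B_s}g_{\e s}(x)\,dx\ge A^d_{\e s}\,\mathcal{L}^d(E\cap B_s)-(\text{small})> A^d_{\e s}\rho(1-\eta)\mathcal{L}^d(B_s)$. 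For \eqref{bs3}, use \eqref{grsq} of Lemma~\ref{lemma:kernelDdim} to write $\int_{B_s}g_{\e s}(x)^2\,dx\le\int g_{\e s}(x)^2\,dx=\int_{E\cap B_s}\int_{E\cap B_s}K_{\e s}^{(d)}(y-z)\,dy\,dz$, then apply Lemma~\ref{lemma:intkerboundDdim} (whose hypothesis \eqref{dens} holds for all sub-balls of radius $\ge\lambda s$, as arranged): the right side is $<(A^d_{\e s})^2(1+\e)^d((1+\alpha)^2\rho^2+\delta)\mathcal{L}^d(B_s)<(A^d_{\e s})^2\rho^2(1+\eta)\mathcal{L}^d(B_s)$ by the parameter choices, uniformly over $\e\in[\xi\e_0,\e_0]$.

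The main obstacle I expect is bookkeeping rather than conceptual: one has to be scrupulous about the \emph{order} in which $\e_0,\alpha,\delta,\lambda,s_0$ are selected — in particular that $\lambda$ depends on $\e_0$ and $\delta$ but not on $s$, so that a \emph{single} $s_0=s_1(\alpha,E)/\lambda$ works for all large $s$ — and about the harmless but slightly fiddly step of restricting to $E\cap B_s$ and accounting for the spheres $S_{\e s}(x)$ with $x$ near $\partial B_s$ that poke outside $B_s$. These boundary losses are of relative size $O(\e)\le O(\e_0)$ and get folded into the $\eta$-budget by choosing $\e_0$ (and the various $\alpha,\delta$) a little smaller still. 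Everything else is a direct concatenation of the preceding lemmas.
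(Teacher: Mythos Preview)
Your overall strategy matches the paper's: choose $\alpha,\delta,\e_0$ first, extract $\lambda$ from Lemma~\ref{lemma:intkerboundDdim}, set $s_0=s_1(\alpha,E)/\lambda$, then for each $s>s_0$ pick $B_s$ via Lemma~\ref{observation:rho} and derive \eqref{bs1}--\eqref{bs3} from Lemmas~\ref{lemma:kernelDdim} and~\ref{lemma:intkerboundDdim}. The parameter bookkeeping and the treatment of \eqref{bs1} and \eqref{bs2} are fine.

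There is, however, a genuine slip in your argument for \eqref{bs3}. You write
\[
\int_{B_s} g_{\e s}(x)^2\,dx \;\le\; \int g_{\e s}(x)^2\,dx \;=\; \int_{E\cap B_s}\int_{E\cap B_s} K_{\e s}^{(d)}(y-z)\,dy\,dz,
\]
but the displayed equality is false: $g_{\e s}$ is defined via the full set $E$, so Lemma~\ref{lemma:kernelDdim} gives $\int g_{\e s}^2 = \iint_E K$ over all of $E$ (which need not even be finite). If instead you replace $E$ by $E\cap B_s$ as you suggest earlier, you obtain the function $f_{\e s}(x):=\mathcal{L}^{d-1}\big((E\cap B_s)\cap S_{\e s}(x)\big)$, and since $f_{\e s}\le g_{\e s}$ pointwise, an upper bound on $\int f_{\e s}^2$ does \emph{not} bound $\int_{B_s} g_{\e s}^2$ from above. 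Your remark that the replacement ``only decreases $g_{\e s}$'' cuts the wrong way here: for the upper bound \eqref{bs3} the boundary contribution is an \emph{excess}, not a loss.

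The fix (which is what the paper does) is to split $B_s = B_{s-\e s}\cup (B_s\setminus B_{s-\e s})$. On the interior $B_{s-\e s}$ one has $g_{\e s}=f_{\e s}$ exactly, so $\int_{B_{s-\e s}} g_{\e s}^2 \le \int f_{\e s}^2$, which \emph{is} equal to $\iint_{E\cap B_s} K_{\e s}$ and is then controlled by Lemma~\ref{lemma:intkerboundDdim}. On the boundary annulus one uses the trivial pointwise bound $g_{\e s}\le A^d_{\e s}$, contributing an additive $(A^d_{\e s})^2(1-(1-\e)^d)\mathcal{L}^d(B_s)$. This extra term must be built into your $\eta$-budget when choosing $\e_0$ (the paper's condition is $(1+\e_0)^d\big((1+\alpha)^2\rho^2+\delta\big)+\big(1-(1-\e_0)^d\big)<\rho^2(1+\eta)$). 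Once you make this split, your argument goes through and coincides with the paper's.
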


\begin{proof}
Given $\eta \in (0,1)$, we choose positive numbers $\alpha$, $\delta$, $\e_0 \in (0,1)$ small enough to ensure that
\begin{equation}\label{eq:cond1}(1+\e_0)^d ((1+\alpha)^2 \rho^2 +\delta)+  (1-(1-\e_0)^d)< \rho^2 (1+\eta),
\end{equation}
and
\begin{equation}\label{eq:cond2} \rho\alpha + (1-(1-\e_0)^d) < \rho \eta.\end{equation}
Let $\lambda $ be given by Lemma \ref{lemma:intkerboundDdim} for these $\delta, \e_0$ and $\xi$.
Let $s_1:=s_1(\alpha,E)$ from Lemma \ref{observation:rho} and
let $s_0:= s_1/\lambda$.  If $s>s_0$ then $s>s_1$ as $\lambda<1$, and there is a ball $B_s$ of radius $s$ such that
\be\label{eq0}
\frac{\mathcal{L}^d(E\cap B_s)}{\mathcal{L}^d(B_s)}>\rho (1-\alpha).
\ee
By \eqref{eq:cond2}  $\alpha < \eta$ giving \eqref{bs1}.

We now establish \eqref{bs3}.
Let $f_{\e s}(x):=\mathcal{L}^{d-1}((E\cap B_s)\cap S_{\e s}(x))$.
By Lemma \ref{lemma:kernelDdim} applied to $E\cap B_s$,
\be\label{eq1}
\int f_{\e s}(x)^2 \ dx= \int_{E\cap B_s} \int_{E\cap B_s} K_{\e s}(y-z) \,dy \, dz.
\ee
By Lemma \ref{lemma:intkerboundDdim} (which hypotheses are satisfied by definition of $s_1$ and that $ s>s_1/\lambda =s_0$), we get that for all $\e\in [\xi\e_0,\e_0]$,
\be\label{eq2}
\int_{E\cap B_s} \int_{E\cap B_s} K_{\e s}(x-y)\, dx \, dy< (A_{\e s}^{d} )^2 (1+\e )^d ((1+\alpha)^2 \rho^2 +\delta) \mathcal{L}^d(B_s).
\ee

Writing $B_{s-\e s}$ for the ball concentric with $B_s$ and of radius $s-\e s$, then $f_{\e s}(x)=g_{\e s}(x)$ for $x \in B_{s-\e s}$ and $\mathcal{L}^d(B_s \setminus B_{s-\e s})=(1-(1-\e)^d)\mathcal{L}^d(B_s)$. By \eqref{eq1} and \eqref{eq2},
\begin{align*}
\int_{B_s} g_{\e s}(x)^2 \ dx &= \int_{B_{s- \e s}} g_{ \e s}(x)^2 \ dx + \int_{B_s \setminus B_{s- \e s}} g_{\e s}(x)^2 \, dx \\
&\leq \int_{B_s} f_{\e s}(x)^2 \, dx +  (A^d_{\e s})^2  (1-(1-\e)^d)\mathcal{L}^d(B_s)\\
&\leq (A_{\e s}^{d} )^2 \Big[ (1+\e )^d ((1+\alpha)^2 \rho^2 +\delta) +   (1-(1-\e)^d)\Big]\mathcal{L}^d(B_s)\\
&< (A^d_{\e s})^2 \rho^2 (1+\eta)  \mathcal{L}^d(B_s),
\end{align*}
using  \eqref{eq:cond1} since $\e \leq \e_0$.

Finally we apply \eqref{gr} to $E\cap B_{s-\e s}$ to get \eqref{bs2}.
\begin{align*}
\int_{B_s} g_{\e s}(x) \, dx&=\int_{B_s} \mathcal{L}^{d-1}(E\cap S_{\e s}(x)) \, dx\\
&\geq \int_{\R^d}  \mathcal{L}^{d-1}((E\cap B_{s-\e s})\cap S_{\e s}(x)) \, dx\\
&= A^d_{\e s}\,\mathcal{L}^d(E \cap B_{s-\e s})\\
&\geq A^d_{\e s}\,  \big[\mathcal{L}^d(E\cap B_s) - \mathcal{L}^d (B_s \setminus B_{s-\e s})\big]\\
&\geq  A^d_{\e s}\,  \big[ \rho (1-\alpha_0) - \big(1-(1-\e s)^d\big)\big]\mathcal{L}^d(B_s)\\
&>  A^d_{\e s}\, \rho (1-\eta) \mathcal{L}^d(B_s),
\end{align*}
using \eqref{eq0} and \eqref{eq:cond2}.
\end{proof}

The following general lemma bounds the deviation of a function from its mean in terms of its second moment.

\begin{lemma}\label{estimate}
Let $D\subseteq \R^d$ be measurable with $0<\mathcal{L}^d(D) <\infty$, let $g\colon D\to \R_{\geq 0}$ be measurable and not identically $0$, and let $\theta>0$. Then
\begin{align*}
\mathcal{L}^d\bigg\{x\in D: \bigg|g(x) -\frac{1}{\mathcal{L}^d(D)}\int_D \! g(y)dy\bigg|\geq \theta \frac{1}{\mathcal{L}^d(D)}& \int_D \!g(y)dy \bigg\}\\
&\leq\frac{1}{\theta^2}\, \mathcal{L}^d(D) \bigg[\frac{\mathcal{L}^d(D)\int_D g^2}{(\int_D g)^2} -1\bigg].
\end{align*}
\end{lemma}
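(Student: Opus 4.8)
The plan is to prove this as a direct application of Chebyshev's inequality. Writing $\mu := \frac{1}{\mathcal{L}^d(D)}\int_D g\,dy$ for the mean of $g$ over $D$ (which is positive since $g\geq 0$ is not identically $0$), I would first compute the variance-type quantity
\[
\frac{1}{\mathcal{L}^d(D)}\int_D \bigl(g(x)-\mu\bigr)^2\,dx
= \frac{1}{\mathcal{L}^d(D)}\int_D g(x)^2\,dx - \mu^2,
\]
expanding the square and using $\int_D 2\mu g = 2\mu \int_D g = 2\mu^2 \mathcal{L}^d(D)$ and $\int_D \mu^2 = \mu^2 \mathcal{L}^d(D)$.

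Next I would apply the Markov/Chebyshev inequality to the nonnegative function $x\mapsto (g(x)-\mu)^2$ on $D$ with threshold $(\theta\mu)^2$: since
\[
\Bigl\{x\in D : |g(x)-\mu|\geq \theta\mu\Bigr\} = \Bigl\{x\in D : (g(x)-\mu)^2 \geq \theta^2\mu^2\Bigr\},
\]
we get
\[
\mathcal{L}^d\Bigl\{x\in D : |g(x)-\mu|\geq \theta\mu\Bigr\}
\leq \frac{1}{\theta^2\mu^2}\int_D \bigl(g(x)-\mu\bigr)^2\,dx
= \frac{\mathcal{L}^d(D)}{\theta^2\mu^2}\Bigl(\frac{1}{\mathcal{L}^d(D)}\int_D g^2 - \mu^2\Bigr).
\]

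Finally I would substitute $\mu = \frac{1}{\mathcal{L}^d(D)}\int_D g$, so that $\mu^2 = \bigl(\int_D g\bigr)^2 / \mathcal{L}^d(D)^2$, and simplify the right-hand side: the factor $1/\mu^2$ becomes $\mathcal{L}^d(D)^2/(\int_D g)^2$, giving
\[
\frac{\mathcal{L}^d(D)}{\theta^2}\cdot\frac{\mathcal{L}^d(D)^2}{(\int_D g)^2}\cdot\Bigl(\frac{\int_D g^2}{\mathcal{L}^d(D)} - \frac{(\int_D g)^2}{\mathcal{L}^d(D)^2}\Bigr)
= \frac{\mathcal{L}^d(D)}{\theta^2}\Bigl(\frac{\mathcal{L}^d(D)\int_D g^2}{(\int_D g)^2} - 1\Bigr),
\]
which is exactly the claimed bound.

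There is essentially no obstacle here — this is standard second-moment machinery and the only care needed is bookkeeping the factors of $\mathcal{L}^d(D)$ correctly and noting at the outset that $\int_D g > 0$ so that division by $\mu$ is legitimate (if $\int_D g = 0$ then $g = 0$ a.e., contradicting the hypothesis). I would present it in three short displayed steps as above.
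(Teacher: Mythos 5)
Your proof is correct and follows the same route as the paper: compute the variance identity by expanding $(g-\mu)^2$ and then apply Chebyshev's inequality to the nonnegative function $(g-\mu)^2$ with threshold $\theta^2\mu^2$. The only difference is that you spell out the bookkeeping and the positivity of $\mu$ a bit more explicitly, which is fine.
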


\begin{proof}
Identically
$$\int_D \left( g(x) -\frac{1}{\mathcal{L}^d(D)} \int_D g (y) \, dy \right)^2 dx
= \int_Dg(x)^2\, dx- \frac{(\int_D g)^2}{\mathcal{L}^d(D)},$$
so by Chebyshev's inequality

\begin{align*}
\mathcal{L}^d\bigg\{x\in D: \bigg|g(x) -\frac{1}{\mathcal{L}^d(D)}\int_D \! g(y)dy\bigg|\geq \theta \frac{1}{\mathcal{L}^d(D)}& \int_D \!g(y)dy \bigg\}\nonumber\\
&\leq \frac{1}{\theta^2} \frac{\mathcal{L}^d(D)^2}{(\int_D g)^2} \bigg[\int_Dg^2 - \frac{(\int_D g)^2}{\mathcal{L}^d(D)}\bigg]\\
&= \frac{1}{\theta^2}\, \mathcal{L}^d(D) \bigg[\frac{\mathcal{L}^d(D)\int_D g^2}{(\int_D g)^2} -1\bigg].
\end{align*}
\end{proof}

Using Lemma \ref{estimate} with the estimates of Lemma \ref{lemma:bounds} we now show that there is a ball $B_s$ such that `most' $(d-1)$-spheres of radius $\e s$ centred inside $B_s$ intersect $E$ in a proportion of the sphere `close to' $\rho$, the
Banach density of $E$, for a suitable range of $\e$.

\begin{proposition}\label{lemma:mesest}
Let $E\subseteq \R^d$ be a Lebesgue-measurable set of upper Banach density $\rho>0$ and let $0<\rho'<\rho$. Let $0<\xi\leq 1$ and $\delta>0$.
Then there exist $s_0>0$ and $\e_0>0$ such that for all $s\geq s_0$ there is a ball $B_s \subseteq\R^d$ such that
\be\label{meslb}
\mathcal{L}^d(E \cap B_s)>\rho'{\mathcal{L}^d(B_s)}
\ee
and
\be\label{mesbad}
\mathcal{L}^d\big\{x\in B_s: g_{\e s}(x)\leq \rho'A^d_{\e s}\big\} <\delta \mathcal{L}^d(B_s)
\ee
for all $\e \in [\xi\e_0,\e_0]$.
\end{proposition}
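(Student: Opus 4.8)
The plan is to combine Lemma~\ref{lemma:bounds} with the Chebyshev-type estimate of Lemma~\ref{estimate}, applied to the function $g_{\e s}$ on the ball $B_s$. First I would choose an auxiliary parameter $\eta\in(0,1)$ small enough, depending on $\rho$, $\rho'$ and $\delta$, so that both $\rho(1-\eta)>\rho'$ and a suitable deviation threshold works; concretely, pick $\theta>0$ with $(1-\theta)\rho(1-\eta)>\rho'$ (possible since $\rho'<\rho$), and then also demand that $\eta$ be small enough that the bracketed quantity below is less than $\delta\theta^2$. With this $\eta$ (and the given $\xi$), invoke Lemma~\ref{lemma:bounds} to obtain $\e_0>0$ and $s_0>0$ such that for every $s>s_0$ there is a ball $B_s$ of radius $s$ satisfying \eqref{bs1}, \eqref{bs2} and \eqref{bs3} for all $\e\in[\xi\e_0,\e_0]$. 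Inequality \eqref{bs1} is exactly \eqref{meslb} once $\rho(1-\eta)>\rho'$, so that part is immediate.

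For \eqref{mesbad}, fix $\e\in[\xi\e_0,\e_0]$ and apply Lemma~\ref{estimate} with $D=B_s$ and $g=g_{\e s}$. The hypotheses \eqref{bs2} and \eqref{bs3} give a lower bound $A^d_{\e s}\rho(1-\eta)$ on the mean $M:=\mathcal{L}^d(B_s)^{-1}\int_{B_s}g_{\e s}$ and an upper bound $(A^d_{\e s})^2\rho^2(1+\eta)$ on the normalized second moment, hence
\[
\frac{\mathcal{L}^d(B_s)\int_{B_s}g_{\e s}^2}{\big(\int_{B_s}g_{\e s}\big)^2}-1
\leq \frac{(A^d_{\e s})^2\rho^2(1+\eta)}{\big(A^d_{\e s}\rho(1-\eta)\big)^2}-1
=\frac{1+\eta}{(1-\eta)^2}-1,
\]
which tends to $0$ as $\eta\to0$; so by our choice of $\eta$ this is $<\delta\theta^2$. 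Lemma~\ref{estimate} then yields that the set of $x\in B_s$ with $|g_{\e s}(x)-M|\geq \theta M$ has measure less than $\delta\mathcal{L}^d(B_s)$. Finally, on the complement of this bad set we have $g_{\e s}(x)>(1-\theta)M\geq (1-\theta)A^d_{\e s}\rho(1-\eta)>\rho'A^d_{\e s}$ by the choice of $\theta$; therefore $\{x\in B_s: g_{\e s}(x)\leq \rho'A^d_{\e s}\}$ is contained in the bad set and hence has measure $<\delta\mathcal{L}^d(B_s)$, which is \eqref{mesbad}. One small point to handle: Lemma~\ref{estimate} requires $g_{\e s}$ not identically $0$ on $B_s$, but \eqref{bs2} forces $\int_{B_s}g_{\e s}>0$, so this is automatic.

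The only real subtlety is bookkeeping in the choice of constants: $\theta$ must be fixed before $\eta$, since the requirement $(1-\theta)\rho(1-\eta)>\rho'$ and the requirement $(1+\eta)/(1-\eta)^2-1<\delta\theta^2$ both constrain $\eta$ in terms of $\theta$, while $\theta$ itself only needs $(1-\theta)\rho>\rho'$ with a little room to spare. Once one commits to the order ``choose $\theta$, then choose $\eta$, then apply Lemma~\ref{lemma:bounds}, then apply Lemma~\ref{estimate}'', everything falls out. I do not expect any genuine obstacle here — all the analytic work has already been done in Lemmas~\ref{lemma:bounds} and \ref{estimate}, and this proposition is essentially the step that packages those two estimates into the ``most spheres are good'' statement needed for Proposition~\ref{concenspheres}. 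The uniformity over $\e\in[\xi\e_0,\e_0]$ is inherited directly from the corresponding uniformity in Lemma~\ref{lemma:bounds}, so no extra compactness argument is required at this stage.
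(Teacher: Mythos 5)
Your proof is correct and follows essentially the same route as the paper: invoke Lemma~\ref{lemma:bounds} to obtain a ball where the mean and second moment of $g_{\e s}$ are controlled, then apply the Chebyshev bound of Lemma~\ref{estimate} to show the set where $g_{\e s}$ falls below $\rho'A^d_{\e s}$ is small. The only (cosmetic) difference is in where the slack lives: the paper fixes $\theta$ by $\rho'=(1-\theta)\rho$ and runs Chebyshev with threshold $\theta/2$ together with the auxiliary condition $\eta<\theta/2$, while you choose $\theta$ strictly smaller than $1-\rho'/\rho$ up front and run Chebyshev with threshold $\theta$; both are valid and your ordering of the choices ($\theta$ before $\eta$) is exactly what is needed.
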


\begin{proof}
 Let $\rho' = (1-\theta) \rho$ where $0<\theta<1$. Choose $\eta>0$ small enough so that
\be\label{eta}
\frac{4}{\theta^2} \bigg[\frac{1+\eta}{(1-\eta)^2} - 1\bigg] < \delta \quad \mbox{and} \quad \eta < {\textstyle \frac{1}{2}\theta}
\ee
By Lemma \ref{lemma:bounds}, given these $\rho, \xi$ and $\eta$, there exist $\e_0$ and $s_0$ such that for all $s>s_0$ there is a ball $B_s$ satisfying \eqref{meslb} by \eqref{bs1} and \eqref{eta}, and also for all $\e \in [\xi\e_0,\e_0]$,
\begin{align*}
\mathcal{L}^d\big\{x\in B_s: g_{\e s}(x)\leq &\rho(1-\theta) A^d_{\e s}\big\}\\
&\leq \mathcal{L}^d\big\{x\in B_s: g_{\e s}(x)\leq \rho(1-  {\textstyle \frac{1}{2}\theta})(1-\eta) A^d_{\e s}\big\}\\
&\leq \mathcal{L}^d\bigg\{x\in B_s: g_{\e s}(x)\leq (1-  {\textstyle \frac{1}{2}\theta})\frac{\int_{B_s}g_{\e s}(x)\, dx}{\mathcal{L}^d(B_s)} \bigg\}\\
&= \mathcal{L}^d\bigg\{x\in B_s: \frac{\int_{B_s}g_{\e s}(x)\, dx}{\mathcal{L}^d(B_s)}- g_{\e s}(x) \geq {\textstyle \frac{1}{2}\theta}\,\frac{\int_{B_s}g_{\e s}(x)\, dx}{\mathcal{L}^d(B_s)} \bigg\}\\
&\leq \frac{4}{\theta^2}\, \mathcal{L}^d(B_s) \bigg[\frac{\mathcal{L}^d(B_s)\int_{B_s} g_{\e s}(x)^2}{(\int_{B_s} g_{\e s}(x))^2} -1\bigg]\\
&\leq \frac{4}{\theta^2}\, \mathcal{L}^d(B_s) \bigg[\frac{1+\eta}{(1-\eta)^2} - 1\bigg]\\
&<\delta \mathcal{L}^d(B_s),
\end{align*}
where we have used \eqref{eta}, \eqref{bs2}, Lemma \ref{estimate}, \eqref{bs2} and \eqref{bs3}, and \eqref{eta}.
\end{proof}

Now we are able to prove Proposition \ref{concenspheres}
\begin{proof}[{Proof of Proposition \ref{concenspheres}}]
Given $E$, choose  $0<\delta < \rho'/m$ and set $\xi = S/D$. Let $s_0$ and $\e_0$ be given by Proposition \ref{lemma:mesest} for these values. Thus for all $s\geq s_0$ there is a ball $B_s$ such that \eqref{meslb} and \eqref{mesbad} hold for all $\e \in [\e_0S/D, \e_0]$.
By scaling by a factor $\e_0/D$ it is enough to assume that $r_i\in [\e_0S/D, \e_0]$ for all $i$. Then
\begin{align*}
\mathcal{L}^d\big\{x\in E\cap B_s: & \ g_{r_i s}(x)\geq \rho'A^d_{r_i s} \mbox{ for all } 1\leq i \leq m\big\} \\
&\geq \mathcal{L}^d(E\cap B_s) - \sum_{i=1}^{m}\mathcal{L}^d\big\{x\in E\cap B_s: g_{r_i s}(x)\leq \rho'A^d_{r_i s}\big\}\\
&\geq \rho'\mathcal{L}^d( B_s) -m\delta\, \mathcal{L}^d(B_s) >0.
\end{align*}
Thus for all $s\geq s_0$ we may choose $x\in E\cap B_s$  such that \eqref{bigint} is satisfied for all $i$.
\end{proof}

\section{Proof of Theorem \ref{thm:lowerbound}: lower bound for $\rho_{\textup{min}}$}
\label{sec:lowerbound}

In this section we will prove a lower bound claimed in Theorem \ref{thm:lowerbound}, that is, construct a set of density at least that stated in \eqref{eq:lowerbound} that does not contain all sufficiently large $n$-term arithmetic progressions.

\begin{proof}[Proof of Theorem \ref{thm:lowerbound}]
Note that the claim is void unless $10\log n/n^{1/5}<1$. Thus, we assume that $n$ is large enough so that this holds and set
\[ \varepsilon := \frac{10\log n}{n^{1/5}} \in(0,1). \]
In particular, we will have $n>10^5$ throughout the proof.

The set $E$ will come from Bourgain's construction in \cite{B86}, that is, it will be a `thin' version of the set from Example \ref{ex:Bourgain}. We define
\begin{align*}
E & := \bigcup_{m=0}^{\infty} \Big\{ x\in\mathbb{R}^d : m - \frac{1-\varepsilon}{2} < \|x\|^2 < m + \frac{1-\varepsilon}{2} \Big\} \\
& = \Big\{ x\in\mathbb{R}^d : \dist( \|x\|^2, \mathbb{Z} ) < \frac{1-\varepsilon}{2} \Big\}.
\end{align*}
It is easy to see that $E$ has (the most usual type of) density equal to $1-\varepsilon$.

Take some $r>0$ and suppose that there exists an isometry $\R\to E\subseteq\R^d$ mapping $kr\mapsto x_k$ for $k=0,1,\ldots,n-1$, where $x_0,x_1,\ldots,x_{n-1}$ are some points in the set $E$. Let $a_k := \|x_k\|^2$.
The parallelogram law gives
\[ 2\big(\|x_{k}\|^2 + \|x_{k+2}\|^2\big) = \|2x_{k+1}\|^2 + \|x_{k+2}-x_{k}\|^2, \]
i.e.,
\[ a_{k+2} - 2a_{k+1} + a_{k} = 2r^2. \]
Solving this recurrence relation easily gives
\begin{equation}\label{eq:quadraticsequence}
a_k = r^2 k^2 + A k + B; \quad k=0,1,2,\ldots
\end{equation}
for some constants $A$ and $B$. Note that we are constrained to indices $k\leq n-1$ only, but the above formula extends and defines an infinite sequence $(a_k)_{k=0}^{\infty}$.
For now we only assume that $r^2$ is an irrational number; later we will refine this choice.

We will consider the sequence
\begin{equation}\label{eq:sequencemod1}
\mathbf{a} = \big(a_k \textup{ mod } 1\big)_{k=0}^{\infty}
\end{equation}
on the one-dimensional torus $\mathbb{T}=\mathbb{R}/\mathbb{Z}\equiv[0,1)$, so that we can apply quantitative results on uniform distribution of sequences. These results belong to the realm of \emph{discrepancy theory} \cite{Bil14,DT97,KN74}, also known as \emph{single-scale equidistribution theory} \cite[\S 1.1.2]{Tao12}.
The main idea is the following:
\begin{itemize}
\item On the one hand, by the construction of $E$, the first $n$ terms of the sequence $\mathbf{a}$ completely avoid the interval $[(1-\varepsilon)/2,(1+\varepsilon)/2]\subseteq\mathbb{T}$ of length $\varepsilon$.
\item On the other hand, for sufficiently large $n$ the first $n$ terms of the sequence $\mathbf{a}$ should be `sufficiently uniformly distributed' over $\mathbb{T}$.
\end{itemize}
These two claims will lead to a contradiction.
For the second claim we could use some result on quantitative equidistribution of polynomial sequences on $\mathbb{T}$, such as Exercise 1.1.21 from Tao's book \cite{Tao12}.
However, since our sequence \eqref{eq:quadraticsequence} is very special (i.e., it is only quadratic), {since we need uniformity in the coefficients $A,B$} and since we want to be entirely quantitative (i.e., with a precise exponent $1/5$ and an explicit constant, such as $10$), we {will} redo some of the theory from scratch.

The \emph{discrepancy} of the first $n$ terms of the sequence \eqref{eq:sequencemod1} is the number
\[ D_n(\mathbf{a}) := \sup_{[\alpha,\beta)\subseteq[0,1)} \Big| \frac{\mathop{\textup{card}}\big\{ k\in\{0,1,\ldots,n-1\} :  a_k \textup{ mod }1 \in [\alpha,\beta) \big\}}{n} - (\beta-
\alpha) \Big|, \]
which quantifies how uniformly the $a_k$ are distributed over $\mathbb{T}$.
Once we can guarantee
\begin{equation}\label{eq:ineqcontra}
D_n(\mathbf{a}) < \varepsilon,
\end{equation}
we will arrive at a contradiction by taking $[\alpha,\beta]=[(1-\varepsilon)/2,(1+\varepsilon)/2]$.
The famous Erd\H{o}s--Tur\'{a}n inequality (see \cite[Chapter~2, Theorem~2.5]{KN74}) gives an explicit upper bound for the discrepancy in terms of exponential sums:
\begin{equation}\label{eq:Dnestproof1}
D_n(\mathbf{a}) \leq \frac{6}{M+1} + \frac{4}{\pi}\sum_{m=1}^{M} \frac{1}{m} \Big| \frac{1}{n} \sum_{k=0}^{n-1} e^{2\pi i m a_k} \Big|
\end{equation}
for any positive integer $M$ (to be chosen later).

Next, we use an explicit version of van der Corput's trick for exponential sums (see \cite[Lemma~1.1.6]{Tao12}) to get for yet another positive integer $H$ (to be chosen later):
\begin{equation}\label{eq:Dnestproof2}
\Big| \frac{1}{n} \sum_{k=0}^{n-1} e^{2\pi i m a_k} - \frac{1}{n} \sum_{k=0}^{n-1} \frac{1}{H} \sum_{h=1}^{H} e^{2\pi i m a_{k+h}} \Big| \leq \frac{2H}{n}.
\end{equation}
We now estimate the above double sum. We use the Cauchy-Schwarz inequality for the sum in $k$, expand out the square, take into account the explicit formula \eqref{eq:quadraticsequence}, and sum up a few finite geometric sequences:
\begin{align}
\Big| \frac{1}{n} \sum_{k=0}^{n-1} \frac{1}{H} \sum_{h=1}^{H} e^{2\pi i m a_{k+h}} \Big|^2
& \leq \frac{1}{n} \sum_{k=0}^{n-1} \Big| \frac{1}{H} \sum_{h=1}^{H} e^{2\pi i m a_{k+h}} \Big|^2 \nonumber \\
& = \frac{1}{H} + \frac{2}{H^2 n} \mathop{\textup{Re}} \sum_{\substack{0\leq k\leq n-1\\ 1\leq h<h'\leq H}} e^{2\pi i m (a_{k+h'}-a_{k+h})} \nonumber \\
& \left[\,\text{substitute } j=k+h,\ l=h'-h\,\right] \nonumber \\
& = \frac{1}{H} + \frac{2}{H^2 n} \mathop{\textup{Re}} \sum_{l=1}^{H-1} e^{2\pi i l m(lr^2+A)} \sum_{h=1}^{H-l} \sum_{j=h}^{n+h-1} e^{4\pi i j l mr^2} \nonumber \\
& \leq \frac{1}{H} + \frac{4}{H n} \sum_{l=1}^{H-1} \frac{1}{|1-e^{4\pi i l m r^2}|}. \label{eq:Dnestproof3}
\end{align}

The final ingredient comes from the theory of Diophantine approximations \cite{Bug12}. Let us choose a \emph{badly approximable} $z\in[0,1)$, which means that
\begin{equation}\label{eq:badapprox1}
\Big|z-\frac{p}{q}\Big| \geq \frac{c}{q^2}
\end{equation}
for some $c=c(z)>0$ and all $p,q\in\mathbb{Z}$, $q\neq 0$. One such choice is the golden ratio
\begin{equation}\label{eq:badapprox2}
z=\frac{-1+\sqrt{5}}{2},
\end{equation}
in which case we can take
\begin{equation}\label{eq:badapprox3}
c=\frac{1}{3}.
\end{equation}
This can be seen in an entirely elementary way, by using Vi\`{e}te's formulae and writing
\begin{align*}
\frac{1}{q^2} \leq \frac{|p^2+pq-q^2|}{q^2}
& = \Big|\frac{p}{q}-\frac{-1+\sqrt{5}}{2}\Big| \Big|\frac{p}{q}-\frac{-1-\sqrt{5}}{2}\Big| \\
& \leq \Big|\frac{p}{q}-z\Big| \bigg( \Big|\frac{p}{q}-z\Big| + \sqrt{5} \bigg).
\end{align*}
A consequence of \eqref{eq:badapprox1}--\eqref{eq:badapprox3} is
\begin{equation}\label{eq:badapprox4}
\dist(qz,\mathbb{Z}) \geq \frac{1}{3q}
\end{equation}
for every positive integer $q$.
It is interesting to remark that uncountably many choices of $z$ would work out here, provided that we were willing to lower the constant \eqref{eq:badapprox3} to $2^{-15}$, see \cite[Theorem~7.8]{Bug12}.

Now let $r>0$ be any number such that $r^2 - z \in \mathbb{Z}$, where $z$ was given in \eqref{eq:badapprox2}. The set of such numbers is unbounded. From \eqref{eq:badapprox4} we get
\[ |1-e^{4\pi i l m r^2}| = |1-e^{4\pi i l m z}| \geq 4\,\dist(2 l m z, \mathbb{Z}) \geq \frac{2}{3 l m} \]
for positive integers $l$ and $m$, so
\begin{equation}\label{eq:Dnestproof4}
\frac{4}{H n} \sum_{l=1}^{H-1} \frac{1}{|1-e^{4\pi i l m r^2}|} \leq \frac{3H m}{n}.
\end{equation}
Combining \eqref{eq:Dnestproof1}, \eqref{eq:Dnestproof2}, \eqref{eq:Dnestproof3}, and \eqref{eq:Dnestproof4} we end up with
\[ D_n(\mathbf{a}) \leq \frac{6}{M+1} + \frac{4}{\pi}(1+\log M) \Big( \frac{2H}{n} + \frac{1}{H^{1/2}} \Big) + \frac{8\sqrt{3}}{\pi}\Big(\frac{H M}{n}\Big)^{1/2}, \]
so choosing
\[ H = \lfloor (1/25) n^{2/5} \rfloor, \quad M = \lfloor 4 n^{1/5} \rfloor \]
we obtain
\[ D_n(\mathbf{a}) < \frac{10\log n}{n^{1/5}}. \]
This is precisely \eqref{eq:ineqcontra} and it leads to the desired contradiction.
\end{proof}

\end{document}